
\documentclass[a4paper,11pt]{amsart}

\usepackage{amssymb,amsmath,amsthm,amscd}
\usepackage[ansinew]{inputenc} 
\usepackage[english]{babel}

\usepackage{newtxtext}
\usepackage{newtxmath}
\usepackage[T1]{fontenc}
\usepackage{mathrsfs}
\usepackage{amscd}
\usepackage{currfile}
\usepackage{hyperref}
\usepackage{bm}
\usepackage{currfile}

\newtheorem{theorem}{Theorem}
\newtheorem*{theorem*}{Theorem}

\newtheorem{lemma}[theorem]{Lemma}
\newtheorem{proposition}[theorem]{Proposition}
\newtheorem{corollary}[theorem]{Corollary}
\newtheorem{example}[theorem]{Example}

\theoremstyle{definition}
\newtheorem{remark}[theorem]{Remark}


\newcommand{\hh}{{\mathbb{H}}}
\newcommand{\HH}{{\mathbb{H}}}

\newcommand{\cc}{{\mathbb{C}}}
\newcommand{\rr}{{\mathbb{R}}}
\newcommand{\zz}{{\mathbb{Z}}}
\newcommand{\nn}{{\mathbb{N}}}

\newcommand{\s}{{\mathbb{S}}}

\newcommand{\SR}{\mathcal{SR}}
\newcommand{\SF}{\mathcal{S}}
\newcommand{\ZH}{\mathcal{ZH}}
\newcommand{\AM}{\mathcal{AM}}

\newcommand{\I}{\mathcal{I}}
\newcommand{\LL}{\mathcal{D}}
\newcommand{\Pn}{\mathcal{P}}
\newcommand{\dcf}{\dibar}
\newcommand{\dibar}{\overline\partial}
\newcommand{\cdcf}{\partial}
\newcommand{\dif}{\vartheta}
\newcommand{\difbar}{\overline{\dif}}
\newcommand\IM{\operatorname{Im}}

\newcommand\vs[1]{{#1}_s^\circ}
\newcommand\sd[1]{{#1}'_s}

\newcommand\im{\operatorname{Im}}
\newcommand\Log{\operatorname{Log}}

\newcommand\sinc{\operatorname{sinc}}

\newcommand{\ui}{\imath}

\newcommand{\OO}{\Omega}

\newcommand{\mbb}{\mathbb}

\newcommand{\R}{\mbb{R}}

\newcommand{\C}{\mbb{C}}

\newcommand{\Zt}{\widetilde{\mathcal Z}}

\newcommand{\ID}{\widetilde\Delta}

\begin{document}

\title{A local Cauchy integral formula for slice-regular functions}

\author{Alessandro Perotti}
\address{Department of Mathematics, University of Trento, Via Sommarive 14, Trento Italy}
\email{alessandro.perotti@unitn.it}

\thanks{The author is a member the INdAM Research group GNSAGA and was supported by the grants ``Progetto di Ricerca INdAM, Teoria delle funzioni ipercomplesse e applicazioni'', and PRIN ``Real and Complex Manifolds: Topology, Geometry and holomorphic dynamics''.}

\begin{abstract}
We prove a Cauchy-type integral formula for slice-regular functions where the integration is performed on the boundary of an open subset of the quaternionic space, with no requirement of axial symmetry. In particular, we get a local Cauchy-type integral formula. 
As a step towards the proof, we provide a decomposition of a slice-regular function as a combination of two axially monogenic functions. 
\end{abstract}

\keywords{Slice-regular functions, Axially monogenic functions, Cauchy-Riemann operator, Cauchy integral formula}
\subjclass[2010]{Primary 30G35; Secondary 30E20; 26B20}

\maketitle


\section{Introduction}
Cauchy's integral formula is one of the most powerful tools in complex analysis. It plays a key role also in the study of 
any function theory that aims to extend complex analysis to higher dimensional algebras. In the four dimensional case represented by the quaternionic skew field, there are at least two different generalizations of the concept of holomorphic functions. The first one deals with functions in the kernel of the Cauchy-Riemann-Fueter differential operator $\dcf := \frac{1}{2}\Big(\frac{\partial }{\partial x_0} + i \frac{\partial }{\partial x_1}  + j \frac{\partial }{\partial x_2}  + k \frac{\partial }{\partial x_3} \Big)$, where $i,j,k$ are the quaternionic basic imaginary units and $x=x_0+ix_1+jx_2+kx_3$ is the real representation of a quaternion $x$. These functions, usually called Fueter-regular or monogenic, have been studied extensively for many  decades. Primary references are the article of Fueter \cite{Fueter1934}, where the Cauchy's integral theorem was proved, the 
paper of Sudbery \cite{Su} and the monograph \cite{BDS}, where the results were given in their generality in the context of Clifford analysis. 
The second function theory, introduced in 2006-2007 by Gentili and Struppa \cite{GeSt2006CR,GeSt2007Adv} following an idea of Cullen \cite{Cullen}, with the objective to include quaternionic polynomials and series, is the theory of quaternionic slice-regular functions. This function theory is based on the particular complex-slice structure of the quaternionic space $\mathbb{H}$. In Section~2 we briefly recall 
the definitions and properties of slice functions and slice-regular functions that are used in the subsequent sections. 

A Cauchy-type integral formula with slice-regular kernel was proved in 
\cite{CoGeSaAGAG}. In this formula integration is performed over the boundary of a two-dimensional domain having an axial symmetry with respect to the real axis. A volume Cauchy-type formula, where integration is made over the boundary of an open axially symmetric domain, was proved in \cite{VolumeCauchy} in the more general context of real alternative *-algebras. That result extended to every slice-regular function a similar one obtained by Cullen \cite{Cullen} on the quaternions. 

The aim of the present paper is to prove a Cauchy-type integral formula (Theorem \ref{thm:Cauchy}) for slice-regular functions where the integration is performed on the boundary of a not necessarily axially symmetric open set. As a corollary, we obtain a local Cauchy-type integral formula for slice-regular functions (Corollary \ref{cor:localCauchy}). The integral kernel is not slice-regular, but it is universal, i.e., not depending on the domain. 
An unavoidable aspect of the formula is the appearing, along with the boundary values of the slice-regular function, of the values of a complementary 
function, namely, the slice derivative of a slice primitive of the function.

The Cauchy-type formula is proved using facts from both the above-mentioned quaternionic functions theories. We show (Theorem \ref{teo:AMDec}) that every slice-regular function can be expressed as a combination of two axially monogenic functions, to which the Fueter's version of Cauchy's integral formula apply. We recall that an axially monogenic function is a monogenic slice function, i.e., a slice function in the kernel of the operator $\dcf$. 
We also give a new proof (Theorem \ref{teo:Surjectivity}) of the surjectivity of the Laplacian mapping from the space of slice-regular functions to that of axially monogenic functions (see \cite{TheInverseFueter}). This result has a role in proving the uniqueness of the above-mentioned decomposition in terms of axially monogenic functions.

\section{Preliminaries}\label{sec:pre}

The slice function theory of one quaternionic variable \cite{GeSt2006CR,GeSt2007Adv} 
is based on the slice decomposition of the quaternionic space $\hh$. For each element $J$ in the sphere of quaternionic imaginary units
 \[\s=\{J\in\HH\ |\ J^2=-1\}=\{x_1i+x_2j+x_3k\in\HH\ |\ x_1^2+x_2^2+x_3^2=1\},\]
we let $\C_J=\langle 1,J\rangle\simeq\C$ be the subalgebra generated by $J$. Then it holds
\[\HH=\bigcup_{J\in \s}\C_J, \quad\text{with $\C_J\cap\C_K=\R$\ for every $J,K\in\s,\ J\ne\pm K$.}\]
A differentiable function $f:\OO\subseteq\HH\rightarrow\HH$  is called \emph{(left) slice-regular} \cite{GeSt2007Adv} on the open set $\OO$ if, for each $J\in\s$, the restriction
$f_{\,|\OO\cap\C_J}\, : \, \OO\cap\C_J\rightarrow \HH$
is holomorphic with respect to the complex structure defined by left multiplication by $J$. 
We refer the reader to \cite{GeStoSt2013} 
and the references therein for more results in this function theory. 

Another approach to slice regularity was introduced in \cite{GhPe_Trends,GhPe_AIM} (see also \cite{AlgebraSliceFunctions}
 for recent developments), making use of the concept of \emph{slice functions}. 
We briefly recall their definition and some operations on them.
Given a set $D\subseteq\C$, invariant with respect to complex conjugation, a function $F: D\to \hh\otimes\C$ that satisfies $F(\overline z)=\overline{F(z)}$ for every $z\in D$ (the conjugation in $\hh\otimes\C$ is induced by complex conjugation in the second factor) is called a \emph{stem function} on $D$, a concept already present in seminal works of Fueter \cite{Fueter1934} and Cullen \cite{Cullen}.

Let  $\Phi_J:\C\to\C_J$ be the canonical isomorphism that maps $a+\ui b\in\cc$ to $a+Jb$ (with $\ui^2=-1$). Given an open set $D\subseteq\cc$, let $\OO_D=\cup_{J\in\s}\Phi_J(D)\subset\hh$. 
Open sets in $\hh$ of the form $\OO=\OO_D$ are called \emph{axially symmetric} sets. 
An axially symmetric connected set $\OO$ is called a \emph{slice domain} if $\OO\cap \R\ne\emptyset$, a \emph{product domain} if $\OO\cap \R=\emptyset$. Any axially symmetric open set is union of a family of domains of these two types. 

The stem function $F=F_1+\ui F_2$  on $D$ (with $F_1,F_2:D\to\hh$) induces the \emph{slice function} $f=\I(F):\OO_D \to \hh$ as follows:
 if $x=\alpha+J\beta=\Phi_J(z)\in \OO_D\cap \C_J$, then
\[f(x)=F_1(z)+JF_2(z).\]
The slice function $f$ is called \emph{slice-regular} if $F$ is holomorphic w.r.t.\ the complex structure induced on $\hh\otimes\C$ by the second factor. 
If a domain $\OO$ in $\hh$ is axially symmetric and intersects the real axis, then this definition of slice regularity is equivalent to the one proposed by Gentili and Struppa \cite{GeSt2007Adv}. We will denote by $\SR(\OO)$ the right quaternionic module of slice-regular functions on $\OO$ and by $\SF^1(\OO)$ the class of slice functions induced by stem functions of the class $C^1$ on $\OO$.


The \emph{slice product} of two slice functions $f=\I(F)$, $g=\I(G)$ on $\OO=\OO_D$ is defined by means of the pointwise product of the stem functions:
\[f\cdot g=\I(FG).\]
The function $f=\I(F)$ is called \emph{slice-preserving} if the $\hh$-components $F_1$ and $F_2$ of the stem function $F$ are real-valued.
This is equivalent to the condition $f(\overline x)=\overline{f(x)}$ for every $x\in\OO$.
If $f$ is slice-preserving, then $f\cdot g$ coincides with the pointwise product of $f$ and $g$. 
If $f,g$ are slice-regular on $\OO$, then also their slice product $f\cdot g$ is slice-regular on $\OO$. 


The \emph{slice derivatives} $\dd{f}{x},\dd{f\;}{x^c}$ of a slice functions $f=\I(F)$ are defined by means of the  Cauchy-Riemann operators applied to the inducing stem function $F$:
\[\dd{f}{x}=\I\left(\dd{F}{z}\right),\quad \dd{f\;}{x^c}=\I\left(\dd{F}{\overline z}\right).\]
It follows that $f$ is slice-regular if and only if $\dd{f\;}{x^c}=0$ and if $f$ is slice-regular on $\OO$ then also $\dd{f}{x}$ is slice-regular on $\OO$. Moreover, the slice derivatives satisfy the Leibniz product rule w.r.t.\ the slice product. If $f=\dd g x$, we will say that $g$ is a \emph{slice primitive} of $f$.

We recall other two useful concepts introduced in \cite{GhPe_AIM}. Given a slice function $f=\I(F_1+\ui F_2)$ on $\OO$, the function $\vs f:\OO \to \hh$, called \emph{spherical value} of $f$, and the function $f'_s:\OO \setminus \R \to \hh$, called  \emph{spherical derivative} of $f$, are defined as
\[
\vs f(x):=\tfrac{1}{2}(f(x)+f(\overline x))
\quad \text{and} \quad
f'_s(x):=\tfrac{1}{2} \im(x)^{-1}(f(x)-f(\overline x)).
\] 
The functions $\vs f=\I(F_1)$ and $f'_s=\I(\beta^{-1}F_2)$ are slice functions, constant on 2-\emph{spheres}\, $\s_x=\alpha+\s\beta$ for any $x=\alpha+J\beta\in\OO\setminus\R$, and such that 
\begin{equation}\label{eq:spherical}
  f(x)=\vs{f}(x)+\im(x)f'_s(x)
\end{equation}
for every $x\in\OO\setminus \R$. 
The spherical derivative satisfies a Leibniz-type rule w.r.t.\ the slice product (see \cite[\S5]{GhPe_AIM}): $\sd{(f\cdot g)}=\sd f\vs g+\vs f\sd g$.
For any slice-regular function $f$ on $\OO$, $\sd{f}$ extends as the slice derivative $\dd{f}{x}$ on $\OO\cap\R$.
Note that if  $f$ is slice-regular and $\sd f\equiv0$, then $F_2\equiv0$ and $F$ is locally constant. Another remarkable property of the spherical derivative of a slice-regular function is its harmonicity when considered as a map of four real variables (see Theorem \ref{thm:harmonicity} of Section 3). 

\section{An axially monogenic decomposition for slice-regular functions}

Let $\dcf$ denote the Cauchy-Riemann-Fueter operator
\[\dcf =\frac12\left(\dd{}{x_0}+i\dd{}{x_1}+j\dd{}{x_2}+k\dd{}{x_3}\right).\]
Given an axially symmetric domain $\OO$ of $\hh$, let $\AM(\OO)$ be the class of axially monogenic functions, i.e., of monogenic slice functions on $\OO$:
\[\AM(\OO)=\{f\in\SF^1(\OO)\,|\,\dcf f=0\}.
\]
There is a result, usually called Fueter's Theorem \cite{Fueter1934}, which in its generalised form can be seen as a bridge between the class of slice-regular functions and the one of monogenic functions. 
We report this result from \cite[Prop.~3.61, Cor.~3.6.2 and Thm.~3.6.3]{Harmonicity}, where some formulas linking the spherical derivative of slice functions with the Cauchy-Riemann-Fueter operator were proved. 
Let $\Delta$ denote the Laplacian operator in $\rr^4$. 
In the statement of this result we will also need the global differential operator $\difbar$ introduced in \cite{Gh_Pe_GlobDiff}. For slice functions $f\in C^1(\OO)$, it holds $\difbar f=\dd{f}{x^c}$ on $\OO\setminus\rr$ (see \cite[Theorem 2.2]{Gh_Pe_GlobDiff}).

\begin{theorem}[\cite{Harmonicity}]\label{thm:harmonicity}
Let $\OO$ be an axially symmetric domain 
in $\HH$. Let  $f:\OO\to\HH$ be a slice function of class $\mathcal{C}^1(\OO)$. Then
\begin{enumerate}
\item
$\dcf f=\difbar f-\sd f=\dd{f\ }{x^c}-\sd f$ on $\OO\setminus\rr$.
In particular, 
$f$ is slice-regular if and only if $\,\dcf f=-f'_s$, and $f$ is axially monogenic if and only if $\dd f{x^c}=\sd f$.
\item
If $f:\OO\to\HH$ is slice-regular, then it holds:
\begin{enumerate}
\item
  The four real components of $f'_s$ are harmonic on $\OO$. 
\item
The following generalization of Fueter's Theorem holds:
\[\dcf\Delta f=\Delta \dcf f=-\Delta f'_s=0.\]
\item
$\Delta f=-4\,\dd{\sd f}{x}$.
\end{enumerate}
\end{enumerate}
\end{theorem}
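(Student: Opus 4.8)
The plan is to reduce the statement to two direct computations at the level of the inducing stem function. For $x\in\OO\setminus\rr$ write $x=\alpha+J\beta$ with $\alpha=\re(x)=x_0$, $\beta=|\im(x)|>0$ and $J=\im(x)/\beta\in\s$, so that $f(x)=F_1(z)+JF_2(z)$ with $z=\alpha+\ui\beta$. Regarding $\alpha$, $\beta$ and $J$ as functions of $(x_0,x_1,x_2,x_3)$, I would apply the chain rule to each of $\dd{f}{x_0},\dots,\dd{f}{x_3}$, using only $\dd{\beta}{x_l}=x_l/\beta$ and $\dd{J}{x_l}=\beta^{-1}e_l-x_l\beta^{-2}J$ (here $e_1,e_2,e_3$ stand for $i,j,k$, acting by left multiplication), and then assemble $\dcf f=\tfrac12(\dd{f}{x_0}+\sum_{l=1}^3 e_l\dd{f}{x_l})$. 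The whole simplification rests on the three algebraic identities $\sum_l e_lx_l=\beta J$, $\sum_l e_l^2=-3$ and $J^2=-1$. Collecting the terms in which the chain rule hits $F_1,F_2$ through $\alpha,\beta$ reproduces exactly $\I(\dd{F}{\overline z})=\dd{f}{x^c}$, whereas the two contributions coming from $\dd{J}{x_l}$ combine, through $\sum_l e_l^2=-3$ and $J^2=-1$, into the single correction $-\beta^{-1}F_2=-\sd f$. This gives $\dcf f=\dd{f}{x^c}-\sd f$ on $\OO\setminus\rr$, and the identity $\dd{f}{x^c}=\difbar f$ there is the one quoted from \cite{Gh_Pe_GlobDiff}.

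The two equivalences in (1) are then immediate: $f$ is slice-regular iff $\dd{f}{x^c}=0$, i.e.\ iff $\dcf f=-\sd f=-f'_s$, and $f$ is axially monogenic iff $\dcf f=0$, i.e.\ iff $\dd{f}{x^c}=\sd f$. For part (2) I would first record the conjugate of the previous computation: running the identical chain-rule argument with $\cdcf=\tfrac12(\dd{}{x_0}-\sum_l e_l\dd{}{x_l})$ yields $\cdcf g=\dd{g}{x}+\sd g$ for every $g\in\SF^1(\OO)$, and the same algebra gives the factorization $\Delta=4\,\cdcf\dcf=4\,\dcf\cdcf$ of the Euclidean Laplacian (valid because the $e_l$ are constant, square to $-1$, and anticommute in pairs, so all mixed second-order terms cancel). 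Now assume $f$ slice-regular. The spherical derivative $f'_s=\sd f$ is induced by the stem function $\beta^{-1}F_2+\ui\,0$, hence $(\sd f)'_s=0$; applying the conjugate identity gives $\cdcf f'_s=\dd{\sd f}{x}$, and therefore $\Delta f=4\cdcf\dcf f=-4\cdcf f'_s=-4\dd{\sd f}{x}$, which is (c). Since $\Delta$ commutes with the constant-coefficient operator $\dcf$, we have $\dcf\Delta f=\Delta\dcf f=-\Delta f'_s$, so (b) reduces to the harmonicity $\Delta f'_s=0$ asserted in (a).

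To prove (a) I would exploit that $f'_s(x)=\beta^{-1}F_2(\alpha,\beta)$ depends only on $\alpha$ and $\beta=|\im(x)|$, so on such functions the four-dimensional Laplacian takes the radial form $\Delta h=\partial_\alpha^2 h+\partial_\beta^2 h+\tfrac{2}{\beta}\partial_\beta h$, the term $\tfrac{2}{\beta}\partial_\beta h$ being the radial part of the Laplacian in the three imaginary variables. Slice-regularity means the stem function is holomorphic, $\dd{F}{\overline z}=0$, which componentwise reads $\partial_\alpha F_1=\partial_\beta F_2$ and $\partial_\beta F_1=-\partial_\alpha F_2$, and hence forces $\partial_\alpha^2 F_2+\partial_\beta^2 F_2=0$. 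Substituting $h=\beta^{-1}F_2$ into the radial Laplacian, the contributions in $\beta^{-3}F_2$ and in $\beta^{-2}\partial_\beta F_2$ cancel and one is left with $\beta^{-1}(\partial_\alpha^2 F_2+\partial_\beta^2 F_2)=0$. Thus $\Delta f'_s=0$ on $\OO\setminus\rr$; since $f$, being slice-regular, is real-analytic, $f'_s$ extends smoothly across $\OO\cap\rr$ (as the slice derivative $\dd{f}{x}$ there) and harmonicity propagates by continuity, giving (a), hence also (b), on all of $\OO$. The same real-analyticity upgrades (c) from $\OO\setminus\rr$ to $\OO$.

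The main obstacle is the bookkeeping in the first chain-rule computation: one must differentiate the $J$-dependent expression $F_1(\alpha,\beta)+JF_2(\alpha,\beta)$ while respecting the non-commutativity of $i,j,k$, and it is precisely the $\dd{J}{x_l}$ terms---collapsing via $\sum_l e_l^2=-3$ and $J^2=-1$ into the spherical-derivative correction---that distinguish $\dcf$ from the purely ``holomorphic'' slice operator $\dd{f}{x^c}$. Once this identity and its conjugate are established, together with the factorization $\Delta=4\,\cdcf\dcf$, the remainder of the theorem is formal.
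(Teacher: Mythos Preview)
The paper does not actually contain a proof of this theorem: it is quoted verbatim from \cite{Harmonicity} (see the sentence ``We report this result from \cite[Prop.~3.61, Cor.~3.6.2 and Thm.~3.6.3]{Harmonicity}'' preceding the statement), so there is no in-paper argument to compare against. That said, your proposal is a correct and self-contained proof. The chain-rule computation for $\dcf f$ is exactly right---the cancellation $-\tfrac{3}{\beta}F_2+\tfrac{1}{\beta}F_2=-\tfrac{2}{\beta}F_2$ produces the $-\sd f$ term---and the conjugate identity $\cdcf g=\dd{g}{x}+\sd g$ together with $(\sd f)'_s=0$ and $\Delta=4\cdcf\dcf$ yields (2c) cleanly. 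Your radial-Laplacian argument for (2a) is also correct: with $h=\beta^{-1}F_2$ the $\beta^{-3}$ and $\beta^{-2}$ terms cancel and one is left with $\beta^{-1}(\partial_\alpha^2+\partial_\beta^2)F_2=0$ by the Cauchy--Riemann equations. The extension across $\OO\cap\rr$ via the real-analyticity of slice-regular functions and the known fact (recalled in Section~\ref{sec:pre}) that $\sd f$ extends as $\dd{f}{x}$ on the reals is legitimate. In short, your argument is the natural direct computation and is essentially what one finds in the cited reference; nothing is missing.
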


In the following we will use also the following result from \cite{EigenvaluePbms}, which shows that the Laplacian of a slice-regular function can be expressed by first order derivatives.

\begin{lemma}[\cite{EigenvaluePbms} Lemma 23]\label{lem:Laplacian} 
If $f\in\SR(\OO)$, then for every $x\in\OO$ it holds
\[\label{eq:Deltaf1}
\IM(x)\Delta f=2\left(\sd f-\dd{f}{x}\right)=-2\left(\dcf f+\dd{f}{x}\right).
\]
\end{lemma}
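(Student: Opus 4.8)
The plan is to deduce the statement from Fueter's theorem in the form of Theorem~\ref{thm:harmonicity}, the spherical decomposition \eqref{eq:spherical}, and the Leibniz rule for the slice derivative. First, the second equality is essentially free: for slice-regular $f$ one has $\dd{f}{x^c}=0$, so Theorem~\ref{thm:harmonicity}(1) gives $\dcf f=\dd{f}{x^c}-\sd f=-\sd f$ on $\OO\setminus\R$, whence $-2(\dcf f+\dd{f}{x})=2(\sd f-\dd{f}{x})$. Thus it remains to prove the analytic identity $\IM(x)\Delta f=2(\sd f-\dd{f}{x})$. For this I would turn the Laplacian into a first-order object via Theorem~\ref{thm:harmonicity}(2)(c), namely $\Delta f=-4\,\dd{\sd f}{x}$; since the right-hand side is a slice function and $x\mapsto\IM(x)$ is slice-preserving, multiplying by $\IM(x)$ reduces the goal to the single identity
\[
\IM(x)\cdot\dd{\sd f}{x}=\tfrac12\left(\dd{f}{x}-\sd f\right)\qquad\text{on }\OO\setminus\R.
\]

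To prove this, I would read \eqref{eq:spherical} as an identity of slice functions, $f=\vs f+\IM\cdot\sd f$. This is legitimate because $\IM(x)=\I(\ui\beta)$ is slice-preserving (its stem components $0$ and $\beta$ are real-valued), so its slice product with $\sd f$ coincides with the pointwise product. Applying $\dd{}{x}$ and the Leibniz rule for the slice product then yields
\[
\dd{f}{x}=\dd{\vs f}{x}+\dd{\IM}{x}\cdot\sd f+\IM\cdot\dd{\sd f}{x}.
\]
The argument now rests on two computations carried out on the inducing stem functions, with $f=\I(F_1+\ui F_2)$ and $z=\alpha+\ui\beta$: a direct check gives $\dd{\IM}{x}=\tfrac12$, and—using the Cauchy--Riemann equations for $F_1,F_2$ that encode slice regularity—one obtains $\dd{\vs f}{x}=\tfrac12\,\dd{f}{x}$. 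Substituting both into the Leibniz identity leaves $\IM\cdot\dd{\sd f}{x}=\tfrac12\,\dd{f}{x}-\tfrac12\,\sd f$, which is exactly the required identity. Combining it with $\Delta f=-4\,\dd{\sd f}{x}$ gives $\IM(x)\Delta f=2(\sd f-\dd{f}{x})$ on $\OO\setminus\R$, and the identity extends to $\OO\cap\R$ by continuity: there $\IM(x)=0$ annihilates the left-hand side, while $\sd f$ extends to $\dd{f}{x}$, so the right-hand side vanishes as well.

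The step I expect to be the main obstacle is the identity $\dd{\vs f}{x}=\tfrac12\,\dd{f}{x}$, since it is the only point at which slice regularity enters beyond the general machinery of Theorem~\ref{thm:harmonicity}: establishing it requires translating $\vs f=\I(F_1)$ and $\dd{f}{x}=\I(\dd{F}{z})$ into their stem components and invoking the holomorphicity of $F$. A secondary point demanding care is that $\sd f$ and the decomposition \eqref{eq:spherical} live only on $\OO\setminus\R$, so the Leibniz computation must be performed there and the value on the real axis recovered separately by continuity. (Alternatively, one could bypass Theorem~\ref{thm:harmonicity}(2)(c) and compute $\IM(x)\Delta f$ directly by expressing the Euclidean Laplacian of the slice function $f$ through $\alpha$, $\beta$ and the stem components $F_1,F_2$, then simplifying with the Cauchy--Riemann equations; this route is more computational but self-contained.)
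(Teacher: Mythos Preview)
The paper does not supply its own proof of this lemma: it is quoted verbatim from \cite{EigenvaluePbms} and used as a black box. So there is nothing to compare your argument against within the present paper.

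That said, your proposed proof is correct and self-contained. The two auxiliary identities you single out, $\dd{\IM}{x}=\tfrac12$ and $\dd{\vs f}{x}=\tfrac12\dd{f}{x}$ for $f$ slice-regular, both follow by the stem-level computations you outline (the second indeed uses the Cauchy--Riemann equations for $F=F_1+\ui F_2$, which give $\dd{F}{z}=\dd{F_1}{\alpha}-\ui\dd{F_1}{\beta}=2\,\dd{F_1}{z}$). Feeding them into the Leibniz expansion of $f=\vs f+\IM\cdot\sd f$ and combining with Theorem~\ref{thm:harmonicity}(2c) yields the claimed first-order expression for $\IM(x)\Delta f$; the handling of $\OO\cap\R$ by continuity is also fine. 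Your proof has the merit of deriving the result entirely from facts already stated in Section~\ref{sec:pre} and Theorem~\ref{thm:harmonicity}, so it makes the present paper independent of the external citation for this step.
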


It is known that the Laplacian (also called Fueter mapping in this context) maps the space $\SR(\OO)$ onto $\AM(\OO)$ surjectively (see \cite{TheInverseFueter} and \cite{DongQian} and references therein). 
If $\OO$ is connected, the inverse image in $\SR(\OO)$ of a function $g\in\AM(\OO)$ under $\Delta$  is unique up to a quaternionic affine function $xa+b$ (see \cite[Lemma 23(a)]{EigenvaluePbms}).
Using Lemma \ref{lem:Laplacian}, we now give an elementary proof of the surjectivity of $\Delta$ under suitable topological hypotheses.

\begin{theorem}
\label{teo:Surjectivity}
Let $\OO=\OO_D$ be an axially symmetric open set in $\hh$. Assume that every connected component of $D$ is simply connected. Then the Laplacian 
\[
\Delta:\SR(\OO)\to\AM(\OO)
\]
is surjective. 
\end{theorem}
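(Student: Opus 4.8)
The plan is to invert the Fueter map constructively at the level of stem functions, using the first-order reduction $\Delta f=-4\,\frac{\partial\sd f}{\partial x}$ from Theorem~\ref{thm:harmonicity}(2c) together with the characterization of axial monogenicity from Theorem~\ref{thm:harmonicity}(1). Write $z=\alpha+\ui\beta$ for a point of $D$, and let $g=\I(G_1+\ui G_2)\in\AM(\OO)$ be given, with $G_1,G_2:D\to\hh$, so that $G_1$ is even and $G_2$ odd in $\beta$. I seek a slice-regular $f=\I(F_1+\ui F_2)$, i.e.\ a holomorphic stem $F=F_1+\ui F_2$, with $\Delta f=g$; recall $\sd f=\I(\beta^{-1}F_2)$. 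The idea is to recover first the spherical derivative $\sd f$ and then the spherical value $\vs f$ by two successive integrations on $D$, each made possible by the simple connectivity of the components of $D$, and to check that the two integrability conditions produced are exactly the two scalar equations encoding $\frac{\partial g}{\partial x^c}=\sd g$.

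First integration (recovering $\sd f$). By Theorem~\ref{thm:harmonicity}(1), $g\in\AM(\OO)$ is equivalent, at the stem level, to $\frac{\partial G}{\partial\bar z}=\beta^{-1}G_2$; separating the components along $1$ and $\ui$ gives the two real equations $\tfrac12(\partial_\alpha G_1-\partial_\beta G_2)=\beta^{-1}G_2$ (call it (A)) and $\partial_\beta G_1+\partial_\alpha G_2=0$ (call it (B)). Writing $b:=\beta^{-1}F_2$ for the stem of $\sd f$, the identity $\Delta f=-4\,\frac{\partial\sd f}{\partial x}=\I(-2\partial_\alpha b+2\ui\,\partial_\beta b)$ shows that $\Delta f=g$ amounts to prescribing the gradient $\partial_\alpha b=-\tfrac12 G_1$, $\partial_\beta b=\tfrac12 G_2$. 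The integrability condition $\partial_\beta(\partial_\alpha b)=\partial_\alpha(\partial_\beta b)$ is precisely (B), so on each (simply connected) component of $D$ the Poincaré lemma yields such a $b$; the parities of $G_1,G_2$ let me choose $b$ even in $\beta$, as required for $\beta b$ to be a legitimate (odd) stem component.

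Second integration (recovering $\vs f$). Put $F_2:=\beta b$ and solve the Cauchy--Riemann system for the remaining stem component $F_1$, namely $\partial_\alpha F_1=\partial_\beta F_2=b+\beta\,\partial_\beta b$ and $\partial_\beta F_1=-\partial_\alpha F_2=-\beta\,\partial_\alpha b$, which is what makes $F$ holomorphic and hence $f$ slice-regular. Its integrability condition reduces, after differentiating, to $\beta(\partial_\alpha^2+\partial_\beta^2)b+2\,\partial_\beta b=0$; since for a function constant on the spheres $\s_x$ the Euclidean Laplacian on $\rr^4=\rr\times\rr^3$ is $\Delta=(\partial_\alpha^2+\partial_\beta^2)+\tfrac{2}{\beta}\partial_\beta$, this is exactly the harmonicity of $b$. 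Substituting the prescribed gradient of $b$ and invoking (A) shows the condition holds identically, so again by simple connectivity $F_1$ exists and can be taken even in $\beta$. Then $f=\I(F_1+\ui\beta b)$ lies in $\SR(\OO)$ and, by construction of $b$, satisfies $\Delta f=\I(G_1+\ui G_2)=g$.

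The main obstacle is the bookkeeping that ties the two purely analytic integrability conditions to the single equation $\frac{\partial g}{\partial x^c}=\sd g$: the first integration consumes the $\ui$-component (B), while the second consumes the real component (A) through the harmonicity of the spherical derivative; note that Theorem~\ref{thm:harmonicity}(2a) shows this harmonicity to be necessary, and here it is promoted to the solvability criterion. Two further points require care: the parity in $\beta$ must be tracked so that $F_1+\ui\beta b$ is a genuine stem function (across the real axis for slice-domain components, and via the stem relation between conjugate components for product domains), and the factor $\beta^{-1}$ must be handled on $\OO\cap\rr$, where $\sd f$ extends as the slice derivative and $\beta^{-1}G_2$ is regular because $G_2$ is odd. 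Finally, Lemma~\ref{lem:Laplacian} furnishes the same first-order reduction in the equivalent form $\IM(x)\Delta f=2\bigl(\sd f-\frac{\partial f}{\partial x}\bigr)$, which could replace Theorem~\ref{thm:harmonicity}(2c) in setting up the first integration.
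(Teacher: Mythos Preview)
Your argument is correct and follows essentially the same route as the paper: both proofs translate axial monogenicity into the pair of scalar equations (A) $\partial_\alpha G_1-\partial_\beta G_2=2\beta^{-1}G_2$ and (B) $\partial_\beta G_1+\partial_\alpha G_2=0$, use (B) to integrate once (your $b$ is the paper's $H$, satisfying $4\partial_{\bar z}H=-\overline G$), and then use (A) to carry out a second integration (the paper phrases it as ``$F_2$ is harmonic, take a harmonic conjugate $F_1$'', you phrase it as the integrability of the Cauchy--Riemann system for $F_1$). The only organizational differences are that the paper first reduces to the slice-preserving case via a real basis of $\hh$ and targets the reformulation of Lemma~\ref{lem:Laplacian}, whereas you work directly with $\hh$-valued stems and target Theorem~\ref{thm:harmonicity}(2c); as you note yourself, these are interchangeable.
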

\begin{proof}
Let $g=\I(G)=\I(G_1+\ui G_2)\in\AM(\OO)$. Let $\{e_0,e_1,e_2,e_3\}$ be a real basis of $\hh$. The decomposition $G=\sum_{i=0}^3G^ie_i$ defines four stem functions $G^i:D\to\rr\otimes\cc\simeq\cc$. Let $G^i=G^i_1+\ui G^i_2$, with $G^i_1,G^i_2$ real valued. 
Since $g\in\AM(\OO)$, in view of Theorem \ref{thm:harmonicity} it holds $\I(\dd G{\overline z})=\dd g{x^c}=\sd g=\I(\beta^{-1}{G_2})$, i.e., $\dd G{\overline z}=\beta^{-1}{G_2}$. Then
\[
\dd G{\overline z}=\sum_{i=0}^3\dd{G^i}{\overline z}e_i=\beta^{-1}{G_2}=\sum_{i=0}^3(\beta^{-1}{G^i_2})e_i,
\]
which implies $\dd{G^i}{\overline z}=\beta^{-1}{G^i_2}$ for each $i=0,\ldots,3$. Therefore $g=\sum_{i=0}^3g^ie_i$, with every $g^i=\I(G^i)\in\AM(\OO)$ and slice-preserving. If we find $f^i\in\SR(\OO)$ such that $\Delta(f^i)=g^i$, then $\Delta(\sum_{i=0}^3f^ie_i)=g$. We can then assume that $g\in\AM(\OO)$ is slice-preserving, i.e., $G_1$ and $G_2$ are real valued. 

Our aim is to find a slice-preserving $f\in\SR(\OO)$ such that $\IM(x)g=2(\sd f-\dd f x)$, since then Lemma \ref{lem:Laplacian} gives $\Delta f=g$. 
As above, the condition $\dcf g=0$ is equivalent to $\dd G{\overline z}=\beta^{-1}G_2$, i.e., if $z=\alpha+\ui\beta$, 
\begin{equation}\label{eq:dg}
\dd{G_1}{\alpha}-\dd{G_2}{\beta}=2\beta^{-1}G_2,\quad \dd {G_1}{\beta}+\dd{G_2}{\alpha}=0.
\end{equation}
Since any axially symmetric open set is union of a family of slice domains or product domains, we can assume that $\OO$ is a domain of one of these types.

If $\OO=\OO_D$ is a slice domain, $D$ is a simply connected subset of $\cc$. From the second equality in \eqref{eq:dg}, we can find $H\in C^\infty(D,\rr)$ such that $4\dd H{\overline z}=2\left(\dd{H}{\alpha}+\ui\dd{H}{\beta}\right)=-\overline G=-G_1+\ui G_2$. Let 
\[
F_2(z):=\frac{\beta}2(H(z)+H(\overline z))\in C^\infty(D,\rr).
\]
Then $F_2$ is odd w.r.t.\ $\beta$, and it holds $4\dd{}{\overline z}\left(\frac{F_2}{\beta}\right)=-\overline G$. A direct computation shows that $\Delta F_2=G_2-\frac{\beta}2\left(\dd{G_1}{\alpha}-\dd{G_2}{\beta}\right)$. The first equality in \eqref{eq:dg} then implies that $F_2$ is harmonic on $D$. Let $F_1\in C^\infty(D,\rr)$ be an harmonic conjugate of $F_2$. Replacing $F_1$ with $(F_1(z)+F_1(\overline z))/2$ if necessary, we get that $F_1$ is even w.r.t.\ $\beta$ and that $F:=F_1+\ui F_2$ is holomorphic on $D$, i.e., $F$ is a stem function on $D$, inducing a slice-regular $f:=\I(F)$ on $\OO$. It remains to show that $2(\sd f-\dd f x)=\IM(x)g$, or, equivalently, that 
\[
4\left(\frac{F_2}{\beta}-\dd F z\right)=(z-\overline z)G.
\]
It holds 
\[
-\overline G=4\dd{}{\overline z}\left(\frac{F_2}{\beta}\right)=\frac4{\beta}\dd{F_2}{\overline z}+\frac{2F_2}{\ui\beta^2}\quad \Rightarrow\quad
-G=\frac4{\beta}\dd{F_2}{z}-\frac{2F_2}{\ui\beta^2}.
\]
Then
\[
4\left(\frac{F_2}{\beta}-\dd F z\right)=4\frac{F_2}{\beta}-2\ui\left(4\dd {F_2} z\right)=4\frac{F_2}{\beta}-2\ui\left(-\beta G+\frac{2F_2}{\ui\beta}\right)= 2\ui\beta G.
\]

If $\OO=\OO_D$ is a product domain, let $D^+=D\cap\cc^+$ and $D^-=D\cap\cc^-$. Since $D^+$ is simply connected, there exist $H\in C^\infty(D^+,\rr)$ such that $4\dd H{\overline z}=-\overline G$ on $D^+$. $H$ can be extended to $D$ setting $H(z)=H(\overline z)$ for $z\in D^-$. Then as in the previous case $F_2:=\beta H$ is harmonic on $D$, odd w.r.t.\ $\beta$. Let $F_1\in C^\infty(D^+,\rr)$ such that $F=F_1+\ui F_2$ is holomorphic on $D^+$. Setting $F_1(z)=F_1(\overline z)$ for $z\in D^-$, $F$ can be extended to a holomorphic stem function on $D$ satisfying as above $4\left(\frac{F_2}{\beta}-\dd F z\right)=(z-\overline z)G$ on $D$.   
\end{proof}

In view of Theorem \ref{thm:harmonicity}(2c), a right inverse $\ID:\AM(\OO)\to\SR(\OO)$ of $\Delta$ can be defined on axially monogenic polynomials as in \cite[Prop.~24]{EigenvaluePbms}. It associates for every $n\in\zz$ the slice-regular monomial $-\frac14x^{n+2}$ to the rational function $\Pn_n$, defined by
\begin{equation}\label{eq:defPn}
\Pn_n(x):=-\tfrac14\Delta(x^{n+2})=
\dd{}{x}((\sd{x^{n+2})}).
\end{equation}
The functions $\Pn_n$ are axially monogenic and then harmonic. They are slice-preserving functions (not slice-regular for $n\ne0$) on $\hh$. They were computed already by Fueter in \cite{Fueter1934} (see formula (12) on p.\ 316) and afterwords used by many authors. 
For $n\ge0$ the functions $\Pn_n$ are polynomials of degree $n$ in $x_0$, $x_1$, $x_2$, $x_3$. For $n<0$ they are homogeneous functions on $\hh\setminus\{0\}$, still of degree $n$.  
The functions $\Pn_n$ and  $\Pn_{-n}$ are related through the Kelvin transform of $\rr^4$ (\cite[Prop.6.7(c)]{Harmonicity}). 
In particular, $\Pn_{-1}=\Pn_{-2}\equiv0$, while $\Pn_{-3}(x)=\overline x/|x|^4$ is equal, up to a multiplicative constant, to the Cauchy-Fueter kernel $E(x)=\frac1{2\pi^2}\frac{\overline x}{|x|^4}$ (see \cite{Fueter1934} and also \cite{Su}, \cite[Ch.3]{GHS} for more recent expositions).

For $n\ge0$, the polynomials $\Pn_n$ are related to the spherical derivatives $\Zt_{n}(x):=\sd{(x^{n+1})}$ of quaternionic powers. These functions are harmonic homogeneous polynomials of degree $n$ in the four real variables $x_0$, $x_1$, $x_2$, $x_3$. The polynomials $\Zt_{n}$ are called \emph{zonal harmonic polynomials with pole 1}, since they have an axial symmetry with respect to the real axis (see \cite[Ch.5]{HFT} and \cite{Harmonicity,AlmansiH}).




We are now able to write a decomposition of quaternionic polynomials in terms of a pair of axially monogenic polynomials.

\begin{proposition}\label{pro:powers}
For every $n\in\nn$, it holds
\begin{equation}\label{eq:powers}
(n+1)x^n=\Pn_n(x)-\overline x\,\Pn_{n-1}(x)
\end{equation}
for every $x\in\hh$. 
\end{proposition}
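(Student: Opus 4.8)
The plan is to reduce the claimed pointwise identity to an identity between the inducing stem functions and then to a short algebraic cancellation. First I would observe that both sides of \eqref{eq:powers} are slice functions on $\hh$. The left-hand side $(n+1)x^n$ is induced by the stem function $(n+1)z^n$. For the right-hand side, $\Pn_n$ and $\Pn_{n-1}$ are slice-preserving, say $\Pn_m=\I(P_m)$ with $P_m$ having real components, and the conjugation $x\mapsto\overline x$ is itself the slice-preserving slice function $\I(\overline z)$, since $\overline z=\alpha-\ui\beta$ has real components and induces $\alpha-J\beta=\overline x$. Because one factor is slice-preserving, the pointwise product $\overline x\,\Pn_{n-1}(x)$ coincides with the slice product $\I(\overline z)\cdot\Pn_{n-1}=\I(\overline z\,P_{n-1})$, as recalled in Section~\ref{sec:pre}. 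Hence the whole right-hand side is induced by $P_n-\overline z\,P_{n-1}$, and it suffices to prove the stem-function identity
\[
(n+1)z^n=P_n(z)-\overline z\,P_{n-1}(z).
\]

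Next I would compute the stem function $P_n$ of $\Pn_n$. By definition \eqref{eq:defPn}, $\Pn_n=\frac{\partial}{\partial x}\sd{(x^{n+2})}$. The monomial $x^{n+2}$ is induced by $z^{n+2}$, so by the formula $\sd{f}=\I(\beta^{-1}F_2)$ its spherical derivative $\Zt_{n+1}=\sd{(x^{n+2})}$ is induced by
\[
\frac{\im(z^{n+2})}{\beta}=\frac{z^{n+2}-\overline{z}^{\,n+2}}{z-\overline z}=\sum_{j=0}^{n+1}z^j\,\overline{z}^{\,n+1-j}.
\]
Applying the slice derivative, which corresponds to the Wirtinger derivative $\frac{\partial}{\partial z}$ on stem functions, I obtain
\[
P_n(z)=\frac{\partial}{\partial z}\sum_{j=0}^{n+1}z^j\,\overline{z}^{\,n+1-j}=\sum_{k=0}^{n}(k+1)\,z^k\,\overline{z}^{\,n-k},
\]
and likewise $P_{n-1}(z)=\sum_{k=0}^{n-1}(k+1)\,z^k\,\overline{z}^{\,n-1-k}$ for $n\ge 1$, while $P_{-1}\equiv0$ covers the case $n=0$.

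Finally I would substitute into the stem-function identity. Since $\overline z\,\overline{z}^{\,n-1-k}=\overline{z}^{\,n-k}$, the product $\overline z\,P_{n-1}(z)=\sum_{k=0}^{n-1}(k+1)\,z^k\,\overline{z}^{\,n-k}$ has exactly the same summands as $P_n(z)$ for the indices $k=0,\dots,n-1$. Subtracting, all of these cancel and only the top term $k=n$ survives, giving $P_n(z)-\overline z\,P_{n-1}(z)=(n+1)z^n$. Applying $\I$ then yields \eqref{eq:powers}.

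I expect the only delicate point to be the first paragraph, namely making sure that the factor $\overline x$ is handled as a genuine slice-function operation so that the passage to stem functions is legitimate; once the product $\overline x\,\Pn_{n-1}$ is identified with $\I(\overline z\,P_{n-1})$, the computation of $P_n$ via the spherical derivative and the telescoping cancellation are entirely routine.
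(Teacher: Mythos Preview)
Your argument is correct. Reducing to stem functions is legitimate because $\overline x=\I(\overline z)$ is slice-preserving, so the pointwise product $\overline x\,\Pn_{n-1}(x)$ agrees with the slice product and is induced by $\overline z\,P_{n-1}(z)$; the explicit formula $P_n(z)=\sum_{k=0}^{n}(k+1)z^k\overline z^{\,n-k}$ then makes the cancellation immediate.

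The paper's own proof takes a different, slightly shorter route: it quotes the known identity $x^{n+1}=\Zt_{n+1}(x)-\overline x\,\Zt_n(x)$ (from \cite{Harmonicity}) and applies the slice derivative, using the Leibniz rule for the slice product together with $\frac{\partial}{\partial x}\overline x=0$, to obtain \eqref{eq:powers} in one line. In effect, the paper differentiates an already-telescoped identity at the level of the $\Zt_m$, whereas you first pass to stem functions, differentiate there, and then telescope explicitly. Your approach is more self-contained (it does not rely on the cited decomposition of $x^{n+1}$) and makes the underlying cancellation completely transparent; the paper's approach is more conceptual and highlights that \eqref{eq:powers} is simply the slice-derivative of the $\Zt$-decomposition.
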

\begin{proof}
It was proved in \cite[Cor.\ 6.7]{Harmonicity} that for every $n\in\nn$, it holds
\begin{equation*}
x^{n+1}=\Zt_{n+1}(x)-\overline x\, \Zt_{n}(x)=\Zt_{n+1}(x)-\overline x\cdot \Zt_{n}(x)\text{\quad  $\forall x\in\hh$}.
\end{equation*}
Here the pointwise and slice products coincide since $\overline x$ is a slice-preserving function. Taking slice derivatives and using Leibniz property, we get
\begin{equation*}
(n+1)x^{n}=\dd{\Zt_{n+1}(x)}{x}-\overline x\cdot \dd{\Zt_{n}(x)}{x}=\Pn_n(x)-\overline x\cdot\Pn_{n-1}(x)=\Pn_n(x)-\overline x\,\Pn_{n-1}(x).
\end{equation*}
\end{proof}

\begin{corollary}
\label{teo:AMDecPol}
Let $P\in\hh[X]$ have degree $d\ge1$. There exist two axially monogenic polynomials $Q_1$, $Q_2$, of degrees $d$ and $d-1$ respectively, such that 
\begin{equation*}\label{eq:AMDecPol}
P(x)=Q_1(x)-\overline x Q_2(x)\text{\quad  $\forall x\in\hh$.}
\end{equation*}
\end{corollary}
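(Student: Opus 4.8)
The plan is to reduce the statement for a general polynomial $P$ of degree $d$ to the monomial case handled by Proposition~\ref{pro:powers}, exploiting the linearity of the decomposition and the module structure of $\SR(\hh)$. Writing $P(x)=\sum_{n=0}^d x^n a_n$ with coefficients $a_n\in\hh$ on the right, I would like to apply \eqref{eq:powers} term by term. The subtlety is that \eqref{eq:powers} expresses $(n+1)x^n$, so I first divide by $(n+1)$ to get $x^n=\frac{1}{n+1}\bigl(\Pn_n(x)-\overline x\,\Pn_{n-1}(x)\bigr)$ for each $n\ge 0$; note that for $n=0$ this is consistent since $\Pn_0(x)=1$ and $\Pn_{-1}\equiv 0$, giving $x^0=1$.

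The key computation is then simply collecting terms. Substituting, I would obtain
\begin{align*}
P(x)&=\sum_{n=0}^d x^n a_n=\sum_{n=0}^d \tfrac{1}{n+1}\bigl(\Pn_n(x)-\overline x\,\Pn_{n-1}(x)\bigr)a_n\\
&=\underbrace{\sum_{n=0}^d \tfrac{1}{n+1}\Pn_n(x)\,a_n}_{=:Q_1(x)}-\overline x\,\underbrace{\sum_{n=1}^d \tfrac{1}{n+1}\Pn_{n-1}(x)\,a_n}_{=:Q_2(x)}.
\end{align*}
Here I can pull $\overline x$ out to the left of the second sum because $\overline x$ is slice-preserving, so the slice product with it agrees with the pointwise product, exactly as used in the proof of Proposition~\ref{pro:powers}; and multiplication by the right-hand constants $a_n$ does not interfere with the left factor $\overline x$. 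This defines $Q_1$ and $Q_2$ explicitly.

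It remains to verify the three asserted properties of $Q_1,Q_2$. First, each $\Pn_n$ is axially monogenic (stated in the excerpt after \eqref{eq:defPn}), and $\AM(\hh)$ is closed under real-linear combinations and under right multiplication by quaternionic constants, since $\dcf$ is real-linear and commutes with right multiplication by constants; hence $Q_1$ and $Q_2$ are axially monogenic polynomials. Second, the degree count: $\Pn_n$ has degree $n$ for $n\ge 0$, so $Q_1$ has degree $d$ (the top term $\frac{1}{d+1}\Pn_d(x)a_d$ survives provided $a_d\ne 0$, which holds since $\deg P=d$), and $Q_2$ has degree $d-1$ (its top term comes from $n=d$, contributing $\Pn_{d-1}$).

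The main obstacle, though entirely routine here, is bookkeeping around the noncommutativity: one must be careful that the constants $a_n$ sit on the right throughout and that $\overline x$ factors out on the left only because it is slice-preserving. The one genuine point needing a word of justification is that $a_d\ne 0$ forces $Q_1$ to have honest degree $d$ rather than lower; since the monomials $\Pn_n$ have strictly increasing degrees, no cancellation of the leading term can occur, so $\deg Q_1=d$ and $\deg Q_2=d-1$ as claimed. This completes the verification.
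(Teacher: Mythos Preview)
Your proof is correct and follows essentially the same approach as the paper: you write $P(x)=\sum_{n=0}^d x^n a_n$, apply Proposition~\ref{pro:powers} termwise after dividing by $n+1$, and set $Q_1(x)=\sum_{n=0}^d\frac{\Pn_n(x)}{n+1}a_n$ and $Q_2(x)=\sum_{n=1}^d\frac{\Pn_{n-1}(x)}{n+1}a_n$, exactly as the paper does. Your additional remarks on degrees and right-module structure are correct elaborations of what the paper leaves implicit.
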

\begin{proof}
Let $P(x)=\sum_{n=0}^dx^na_n$. The thesis follows immediately from Proposition \ref{pro:powers} by setting
\[
Q_1(x)=\sum_{n=0}^d\frac{\Pn_n(x)}{n+1}a_n\text{\quad and\quad}Q_2(x)=\sum_{n=1}^d\frac{\Pn_{n-1}(x)}{n+1}a_n.
\]
\end{proof}

Corollary \ref{teo:AMDecPol} can be generalized to every slice-regular function. Before doing it, we show one more general property of axially monogenic functions. 

\begin{lemma}\label{lem:am}
Let $\OO$ be as in Theorem \ref{teo:Surjectivity}.
If both $g$ and $\overline x g$ are axially monogenic on $\OO$, then $g$ is identically zero.
\end{lemma}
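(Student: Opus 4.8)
The plan is to pass to stem functions and reduce the statement to a first–order ODE. Write $g=\I(G)$ with $G=G_1+\ui G_2$ and $G_1,G_2\colon D\to\hh$. By Theorem \ref{thm:harmonicity}(1), $g$ is axially monogenic exactly when $\dd{g}{x^c}=\sd g$, which at the level of stem functions reads $\dd{G}{\overline z}=\beta^{-1}G_2$; separating the two components of $\hh\otimes\C$ this is the pair of identities $\dd{G_1}{\alpha}-\dd{G_2}{\beta}=2\beta^{-1}G_2$ and $\dd{G_1}{\beta}+\dd{G_2}{\alpha}=0$. Since $\overline x=\I(\overline z)$ is slice-preserving, the slice product $\overline x\, g$ has stem function $\overline z\,G$, whose components are $H_1=\alpha G_1+\beta G_2$ and $H_2=\alpha G_2-\beta G_1$.

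First I would impose the same monogenicity criterion on $\overline x\, g$, namely $\dd{H}{\overline z}=\beta^{-1}H_2$. A direct computation gives $\dd{H_1}{\alpha}-\dd{H_2}{\beta}=2G_1+\alpha\bigl(\dd{G_1}{\alpha}-\dd{G_2}{\beta}\bigr)+\beta\bigl(\dd{G_1}{\beta}+\dd{G_2}{\alpha}\bigr)$; substituting the two identities above this reduces to $2G_1+2\alpha\beta^{-1}G_2$, while the right–hand side $2\beta^{-1}H_2$ equals $2\alpha\beta^{-1}G_2-2G_1$. Comparing the two forces $4G_1\equiv 0$, hence $G_1\equiv 0$; the remaining equation $\dd{H_2}{\alpha}+\dd{H_1}{\beta}=0$ is then automatically satisfied and carries no further information. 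In other words, the only new constraint extracted from the axial monogenicity of $\overline x\, g$ is the vanishing of the spherical value $\vs g=\I(G_1)$.

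With $G_1\equiv 0$ the system collapses to $\dd{G_2}{\alpha}=0$ and $\dd{G_2}{\beta}=-2\beta^{-1}G_2$, so that $G_2$ depends on $\beta$ alone and, on each connected component of $D$ on which $\beta$ has constant sign, $G_2=c\,\beta^{-2}$ for a quaternionic constant $c$. Reducing $\OO$ to a slice or product domain as in the proof of Theorem \ref{teo:Surjectivity}, it remains to rule out this homogeneous solution, and this is where I expect the real difficulty. The conclusion must come from the regularity and symmetry built into a genuine stem function, namely that $G_2$ is odd in $\beta$ and of class $C^1$ up to the real axis: on a slice domain $D$ meets $\R$, and since $c\,\beta^{-2}$ is unbounded as $\beta\to 0$ while a $C^1$ function that is odd in $\beta$ must vanish there, necessarily $c=0$. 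Thus $G_2\equiv 0$, $G\equiv 0$, and $g=\I(G)\equiv 0$. The main obstacle is therefore not the algebra yielding $G_1\equiv 0$, which is routine, but controlling the homogeneous ODE solution across $\beta=0$, where the hypothesis that the domain meets the real axis is essential.
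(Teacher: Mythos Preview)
Your approach is genuinely different from the paper's and more elementary. The paper invokes the surjectivity of $\Delta:\SR(\OO)\to\AM(\OO)$ (Theorem~\ref{teo:Surjectivity}) to write $g=\Delta f$ with $f\in\SR(\OO)$, proves the operator identity $\LL(\sd f)=\sd{\left(\dd f x\right)}$ for $\LL(h):=\dcf\big(\overline x\,\dd h x\big)$, and concludes that $\dcf(\overline x g)=-4\,\sd{\left(\dd f x\right)}=0$ forces $\dd f x$ to have vanishing spherical derivative, hence $f$ is affine and $g=\Delta f=0$. Your direct stem-function computation reducing to the ODE $\partial G_2/\partial\beta=-2\beta^{-1}G_2$ bypasses the surjectivity theorem entirely, and the slice-domain argument (blow-up of $c\beta^{-2}$ against $C^1$ regularity of $G_2$ at $\beta=0$) is correct.

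The gap is that you do not treat the product-domain case, and your instinct that crossing the real axis is essential is in fact \emph{correct}: the lemma as stated fails on product domains. On $\OO=\hh\setminus\rr$ take $g(x)=\IM(x)\,|\IM(x)|^{-3}$, which is precisely your surviving homogeneous solution ($G_1=0$, $G_2=\operatorname{sgn}(\beta)\beta^{-2}$). A direct check gives $\dcf g=0$ and, writing $\overline x\,g=x_0\,\IM(x)\,|\IM(x)|^{-3}+|\IM(x)|^{-1}$, also $\dcf(\overline x\,g)=0$; thus $g,\overline x\,g\in\AM(\hh\setminus\rr)$ while $g\not\equiv 0$. The same example shows where the paper's argument breaks: on a product domain a slice-regular function with $\sd{\left(\dd f x\right)}=0$ need not be affine, since its stem can take conjugate non-real constants on $D^+$ and $D^-$; indeed $f(x)=-\tfrac12\,\IM(x)/|\IM(x)|$ is slice-regular on $\hh\setminus\rr$ with $\dd f x=0$ but $\Delta f=g\neq 0$. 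So neither proof can close the product-domain case, and both the lemma and the uniqueness claim in Theorem~\ref{teo:AMDec} require the extra hypothesis that each component of $\OO$ meet the real axis.
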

\begin{proof}
Let $\LL$ be the first order linear operator defined for any slice function $h$ of class $C^2(\OO)$ by 
\begin{equation}\label{eq:D}
\LL(h)=\dcf\left(\overline x\dd{h}{x}\right).
\end{equation}
We claim that if $f\in\SR(\OO)$, then $\LL(\sd f)=\sd{\left(\dd{f}{x}\right)}$. Indeed, it holds
\begin{align*}
\overline x\dd{\sd f}{x}&=\dd{(\overline x\sd f)}{x}=\dd{(x_0\sd f-\IM(x)\sd f)}{x}=\dd{(x_0\sd f+\vs f)}{x}+\dd{(-\vs f-\IM(x)\sd f)}{x}\\
&=\dd{(\sd{(xf)})}{x}-\dd{f}{x}=-\frac14\Delta(xf)-\dd{f}{x},
\end{align*}
where we used Theorem \ref{thm:harmonicity}(2c). Then 
\[
\LL(\sd f)=-\frac14\dcf\left(\Delta(xf)\right)-\dcf\left(\dd{f}{x}\right)=-\dcf\left(\dd{f}{x}\right)=\sd{\left(\dd{f}{x}\right)}
\]
thanks to point (2b) of Theorem \ref{thm:harmonicity} applied to the slice-regular function $xf$ and point (1) of the same theorem applied to $\dd f x$.

If $g\in\AM(\OO)$, thanks to the surjectivity of $\Delta:\SR(\OO)\to\AM(\OO)$ we can assume that $g=\Delta f$, with $f\in\SR(\OO)$. If also $\overline x g\in\AM(\OO)$, then $0=\dcf(\overline x g)=\dcf(\overline x\Delta f)=-4\dcf\left(\overline x\dd{\sd f}{x}\right)=-4\LL(\sd f)=-4\sd{\left(\dd f x\right)}$. Therefore $\dd f x$ is locally constant,  $f$ is (locally) an affine function $f(x)=xa+b$, with $a,b\in\hh$ and $g=\Delta f=0$.
\end{proof}

\begin{remark}
Let $\LL$ be the operator defined in \eqref{eq:D}. The claim given in the proof of Lemma \ref{lem:am} shows that
\[
\LL(\Zt_{n+1})=(n+2)\Zt_{n},\text{\quad i.e.,\quad}\dcf(\overline x\Pn_n)=(n+2)\Zt_n
\]
for every $n\in\nn$.
\end{remark}

Now we extend Corollary \ref{teo:AMDecPol} to every slice-regular function. Since every monogenic function is harmonic, the result we obtain can be seen as a refinement of the Almansi type decomposition proved in \cite[Theorem 4]{AlmansiH}

\begin{theorem}
\label{teo:AMDec}
Let $f$ be slice-regular on an axially symmetric open set $\OO$. Assume that $\OO=\OO_D$, and that every connected component of $D$ is simply connected.
Then there exist two uniquely determined axially monogenic functions $g_1$ and $g_2$, such that 
\[
f(x)=g_1(x)-\overline x g_2(x)\text{\quad  $\forall x\in\OO$.}
\]
The functions $g_1$ and $g_2$ can be computed from a slice-regular primitive of $f$. If $f=\dd{g}{x}$ on $\OO$, with $g\in\SR(\OO)$, then
\[
g_1= -\tfrac14\Delta(xg),\quad  g_2=-\tfrac14\Delta g.
\]
Moreover, $f$ is slice-preserving if and only if $g_1$ and $g_2$ are slice-preserving.
\end{theorem}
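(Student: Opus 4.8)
The plan is to produce the two functions explicitly from a slice-regular primitive of $f$, identify them as Laplacians of slice-regular functions (hence axially monogenic by Fueter's Theorem), verify the decomposition by recycling the spherical-value computation already carried out in the proof of Lemma~\ref{lem:am}, and read off uniqueness from that same lemma.

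First I would show that $f$ admits a slice-regular primitive $g\in\SR(\OO)$. Writing $f=\I(F)$ with $F$ holomorphic on $D$, a primitive corresponds to a holomorphic stem function $G$ with $\dd G z=F$. On each simply connected component of $D$ such an antiderivative exists by ordinary one-variable complex analysis; replacing it by $\tfrac12\bigl(G(z)+\overline{G(\overline z)}\bigr)$ leaves it a holomorphic antiderivative of $F$ (here one uses that $F$, being a stem function, satisfies $F(\overline z)=\overline{F(z)}$) while forcing the stem condition $G(\overline z)=\overline{G(z)}$. Thus $g:=\I(G)\in\SR(\OO)$ and $\dd g x=f$. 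This is the only place where the simple-connectivity hypothesis on the components of $D$ is used, apart from its role in Lemma~\ref{lem:am}.

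Next I set $g_1:=-\tfrac14\Delta(xg)$ and $g_2:=-\tfrac14\Delta g$. Since $x\mapsto x$ is slice-preserving, $xg$ is a slice product of slice-regular functions, hence itself slice-regular, and Theorem~\ref{thm:harmonicity}(2b) gives $g_1,g_2\in\AM(\OO)$. To check the decomposition, I rewrite both functions by Theorem~\ref{thm:harmonicity}(2c) as $g_1=\dd{\sd{(xg)}}{x}$ and $g_2=\dd{\sd g}{x}$, and then invoke the identity proved inside Lemma~\ref{lem:am}, namely $\overline x\,\dd{\sd g}{x}=\dd{\sd{(xg)}}{x}-\dd g x$. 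Subtracting yields $g_1-\overline x g_2=\dd g x=f$, as required. I expect this identity-checking to be the main technical point, but since it is exactly the computation already performed for Lemma~\ref{lem:am}, the genuinely new work reduces to the primitive and the slice-preserving claim.

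For uniqueness, if $g_1-\overline x g_2=\tilde g_1-\overline x\tilde g_2$ with all four functions in $\AM(\OO)$, then $h:=g_2-\tilde g_2$ and $\overline x\,h=g_1-\tilde g_1$ are both axially monogenic, so Lemma~\ref{lem:am} forces $h\equiv0$ and hence $g_1=\tilde g_1$, $g_2=\tilde g_2$; the same argument shows independence of the chosen primitive, since two primitives differ by a locally constant $c$ and $\Delta(xc)=\Delta c=0$. Finally, for the slice-preserving equivalence, ($\Leftarrow$) is immediate because $\overline x$ is slice-preserving and slice-preserving functions are closed under pointwise products and differences. For ($\Rightarrow$), if $f$ is slice-preserving then $F$ takes values in $\C$, so both the antiderivative and its symmetrization can be taken $\C$-valued, producing a slice-preserving primitive $g$; then $xg$ is slice-preserving, and because the real Laplacian commutes with quaternionic conjugation and with the reflection $x\mapsto\overline x$ it preserves the defining relation $\varphi(\overline x)=\overline{\varphi(x)}$. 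Combined with Fueter's Theorem, which makes $\Delta(xg)$ and $\Delta g$ slice functions, this shows $g_1$ and $g_2$ are slice-preserving.
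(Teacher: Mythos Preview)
Your proof is correct and follows essentially the same route as the paper: construct a slice-regular primitive $g$, set $g_1=-\tfrac14\Delta(xg)$ and $g_2=-\tfrac14\Delta g$, verify the decomposition via the spherical-derivative identity, and invoke Lemma~\ref{lem:am} for uniqueness. The only cosmetic differences are that you reuse the identity $\overline x\,\dd{\sd g}{x}=\dd{\sd{(xg)}}{x}-\dd g x$ from the proof of Lemma~\ref{lem:am} rather than redoing the computation, and your symmetrization $\tfrac12(G(z)+\overline{G(\overline z)})$ handles slice and product domains uniformly where the paper treats the two cases separately.
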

\begin{proof}
Since any axially symmetric open set is union of a family of slice domains or product domains, we can assume that $\OO$ is a domain of one of these types.
Assume that there exists $g\in\SR(\OO)$ such that $f=\dd{g}{x}$ on $\OO$. Then, using Theorem \ref{thm:harmonicity}(2c), the Leibniz-type formula for spherical derivative (see \cite[\S5]{GhPe_AIM}) and \eqref{eq:spherical} we get
\begin{align*}
-\tfrac14\left(\Delta(xg)-\overline x\Delta g\right)&=\dd{(\sd{(xg)})}{x}-\overline x \dd{(\sd{g})}{x}=\dd{}{x}\left(x_0\sd g+\vs g\right)-\overline x\dd{(\sd{g})}{x}\\
&=\dd{}{x}\left(x_0\sd g+\vs g-\overline x\sd g\right)=\dd{}{x}\left(\vs g+\IM(x)\sd g\right)=\dd{g}{x}=f.
\end{align*}
The functions $g_1:= -\tfrac14\Delta(xg)$ and $g_2=-\tfrac14\Delta g$ are axially monogenic on $\OO$ thanks to Fueter's Theorem \ref{thm:harmonicity}(2b).
To conclude the existence part of the proof it remains to show that there exists a slice-regular primitive of $f$ on $\OO$. Let $\{e_0,e_1,e_2,e_3\}$ be a real basis of $\hh$. If $f=\I(F)$, the decomposition $F=\sum_{i=0}^3F^ie_i$ defines four holomorphic stem functions $F^i:D\to\rr\otimes\cc\simeq\cc$.  

If $\OO=\OO_D$ is a slice domain, by assumption $D$ is a simply connected subset of $\cc$. Let $G^i:D\to\cc$ be a holomorphic primitive of $F^i$, for $i=0,1,2,3$ and let $\tilde G^i$ be defined on $D$ by
\[
\tilde G^i(z):=\frac12(G^i(z)+\overline{G^i(\overline z)}).
\]
Then $\tilde G^i$ is a holomorphic stem function on $D$ such that $\dd{\tilde G^i}{z}=F^i$. The slice function $g=\I(\sum_{i=0}^3\tilde G^ie_i)$ is a slice-regular primitive of $f$. 

If $\OO=\OO_D$ is a product domain, let $D^+=D\cap\cc^+$. Since $D^+$ is simply connected, there exist holomorphic primitives $G^i_+:D^+\to\cc$ of $F^i$, for $i=0,1,2,3$. Define $G^i_-$ on $D^-:=D\cap\cc^-$ by $G_-^i(z):=\overline{G_+^i(\overline z)}$. Then the function $G^i$ defined as $G^i_+$ on $D^+$ and as  $G^i_-$ on $D^-$ is a holomorphic stem function on $D$ such that  $\dd{G^i}{z}=F^i$. We conclude observing that the sum $\sum_{i=0}^3 G^ie_i$ induces a slice-regular primitive of $f$. 

To prove uniqueness of $g_1,g_2$, we use the linearity of the mapping $(g_1,g_2)\mapsto f$ and Lemma \ref{lem:am}. If $f\equiv0$, then $0\equiv g_1-\overline xg_2$. This means that $g_2$ and $\overline x g_2=g_1$ are axially monogenic, and then $g_2$ (and also $g_1$) is identically zero. 

The last statement is immediate from uniqueness of $g_1$ and $g_2$. If $f$ is slice-preserving, then also $g$ and then $\Delta g$ and $\Delta(xg)$ are slice-preserving. Conversely, if $g_1, g_2$ are slice-preserving, then $g_1-\overline x g_2$ has the same property.
\end{proof}

\begin{corollary}
Formula \eqref{eq:powers} 
holds also for negative integers $n$ and $x\in\hh\setminus\{0\}$.
\end{corollary}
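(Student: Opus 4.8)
The plan is to reproduce the argument of Proposition \ref{pro:powers} after first extending its starting point—the zonal identity $x^{m}=\Zt_{m}(x)-\overline x\,\Zt_{m-1}(x)$ of \cite[Cor.~6.7]{Harmonicity}—from $m\in\nn$ to all integers. I first observe that for negative exponents every object involved still makes sense on $\hh\setminus\{0\}$: the powers $x^{m}$ ($m\in\zz$) are slice-preserving, real-analytic slice functions there, $\Zt_m=\sd{(x^{m+1})}$ and $\Pn_m$ are the corresponding homogeneous slice functions of degree $m$, and the relations $\dd{(x^{m+1})}{x}=(m+1)x^{m}$ (slice derivative of a power) and $\Pn_m=\dd{\Zt_{m+1}}{x}$ (by \eqref{eq:defPn}) continue to hold. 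Thus it suffices to establish the zonal identity for negative $m$ and then differentiate.

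The key step is a recursion for the spherical derivatives of powers. Writing $x^{m+1}=x\cdot x^{m}$ (a slice product of slice-preserving functions, hence pointwise), the Leibniz rule for the spherical derivative gives
\[
\Zt_m=\sd{(x^{m+1})}=\sd{(x)}\,\vs{(x^{m})}+\vs{(x)}\,\sd{(x^{m})}=\vs{(x^{m})}+x_0\,\Zt_{m-1},
\]
since $\sd{(x)}=1$ and $\vs{(x)}=x_0$. Substituting $\vs{(x^{m})}=x^{m}-\IM(x)\,\Zt_{m-1}$ from \eqref{eq:spherical} and using $x_0-\IM(x)=\overline x$ yields $\Zt_m=x^{m}+\overline x\,\Zt_{m-1}$, that is,
\[
x^{m}=\Zt_{m}(x)-\overline x\,\Zt_{m-1}(x).
\]
Every manipulation here is valid for each $m\in\zz$ on $\hh\setminus\rr$, and extends to all of $\hh\setminus\{0\}$ by real-analyticity; this is the desired extension of the zonal identity to negative exponents.

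With the zonal identity in hand, I finish exactly as in Proposition \ref{pro:powers}. Taking $m=n+1$ with $n<0$ and applying the slice derivative, the Leibniz rule together with $\dd{\overline x}{x}=0$ and the slice-preserving character of $\overline x$ (so that the slice product with $\Zt_n$ is pointwise) gives
\[
(n+1)x^{n}=\dd{\Zt_{n+1}}{x}-\overline x\,\dd{\Zt_n}{x}=\Pn_n(x)-\overline x\,\Pn_{n-1}(x),
\]
which is \eqref{eq:powers} for $x\in\hh\setminus\{0\}$.

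The only genuine care—and the step I expect to be the main obstacle—is the bookkeeping at the real axis and at the origin: the spherical value and derivative are a priori defined only off $\rr$, while $x^{m}$ and $\Pn_m$ for $m<0$ are singular at $0$. I would handle this by noting that both sides of each displayed identity are restrictions of real-analytic slice functions on $\hh\setminus\{0\}$, so that equality on the dense open set $\hh\setminus\rr$ propagates to all of $\hh\setminus\{0\}$; one could alternatively deduce the negative-exponent formula from the positive one through the Kelvin-transform relation between $\Pn_n$ and $\Pn_{-n}$ recalled after \eqref{eq:defPn}.
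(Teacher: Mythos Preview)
Your proof is correct and takes a genuinely different route from the paper's. The paper derives the negative-exponent case as an application of Theorem~\ref{teo:AMDec}: for $n\le-2$ it plugs the explicit slice-regular primitive $(n+1)^{-1}x^{n+1}$ of $x^n$ into the decomposition $f=-\tfrac14\Delta(xg)+\overline x\,\tfrac14\Delta g$, then rewrites the Laplacians using the definition $\Pn_m=-\tfrac14\Delta(x^{m+2})$; the case $n=-1$ is treated separately since no primitive of the form $(n+1)^{-1}x^{n+1}$ exists. Your argument instead stays entirely at the level of Proposition~\ref{pro:powers}: you first extend the zonal identity $x^{m}=\Zt_m-\overline x\,\Zt_{m-1}$ to all $m\in\zz$ by a direct Leibniz computation for the spherical derivative of $x\cdot x^{m}$, and then take the slice derivative. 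This is more elementary (it bypasses Theorem~\ref{teo:AMDec}, and hence the surjectivity of $\Delta$ and Lemma~\ref{lem:am}), and it also handles $n=-1$ uniformly without a separate case. The paper's approach, on the other hand, illustrates that the corollary is really an instance of the general monogenic decomposition, which is the structural point of the section. Your remark about extending from $\hh\setminus\rr$ to $\hh\setminus\{0\}$ by real-analyticity is the right way to close the argument.
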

\begin{proof}
If $n=-1$, both sides of formula \eqref{eq:powers} vanish. Let  $n\in\zz$, $n\le -2$. Since $(n+1)^{-1}x^{n+1}$ is a slice-regular primitive of $x^n$ on $\hh\setminus\{0\}$, Theorem \ref{teo:AMDec} gives the monogenic decomposition
\[
x^n=-\tfrac14\Delta((n+1)^{-1}x^{n+2})-\overline x\left(-\tfrac14\Delta((n+1)^{-1}x^{n+1})\right)
\]
Therefore $(n+1)x^n=-\tfrac14\Delta(x^{n+2})-\overline x(-\tfrac14\Delta(x^{n+1}))=\Pn_n(x)-\overline x\Pn_{n-1}(x)$.
\end{proof}

\begin{example}
Let $f(x)=\exp(x)\in\SR(\hh)$. Then $\exp(x)=-\frac14\Delta(x\exp(x))-\overline x\left(-\frac14\Delta\exp(x)\right)$, with $\Delta(x\exp(x))$ and $\Delta\exp(x)$ axially monogenic on $\hh$.
The function $\Delta\exp(x)=-2e^{x_0}(\sinc(\beta)-I_x(\sinc(\beta))')$, 
where $I_x=\tfrac{\IM(x)}{|\IM(x)|}$, $\beta=|\IM(x)|$, coincides up to a multiplicative constant with the function $\text{EXP}_3(x)$ defined in \cite[Ex.11.34]{GHS} in the more general context of Clifford algebras.

The function $\Log(x)\in\SR(\hh\setminus\{x\in\rr\,|\,x\le0\}$ induced by the complex principal logarithm  (see e.g.\ \cite[Ex.5]{AlmansiH}) is a slice-regular primitive of $x^{-1}$ on $\hh\setminus\{x\in\rr\,|\,x\le0\}$. Then we can write
\[
x^{-1}=-\tfrac14\Delta(x\Log(x))+\tfrac{\overline{x}}4\Delta(\Log(x)),
\]
with $\Delta(x\Log(x))$ and $\Delta(\Log(x))$ axially monogenic on $\hh\setminus\{x\in\rr\,|\,x\le0\}$. The function $\Delta(\Log(x))$ coincides up to a multiplicative constant with the $\partial$-primitive  $L(x)$  of the Cauchy-Fueter kernel $E(x)$ defined in \cite[(5.7)]{Su}. Here $\partial$ is the conjugated Cauchy-Riemann-Fueter operator.
\end{example}

The statement of Theorem \ref{teo:AMDec} has a converse. In the following proposition we give a differential condition on the pair $(g_1,g_2)$ that ensures the slice-regularity of $g_1-\overline xg_2$. 

\begin{proposition}
\label{pro:AMDecInverse}
Let $g_1$ and $g_2$ be two axially monogenic functions on an axially symmetric set $\OO$. 
If $f$ is defined as $f(x)=g_1(x)-\overline x g_2(x)$ for every $x\in\OO$, 
then $f$ is slice-regular on $\OO$ if and only if it holds
\begin{equation}\label{eq:nec}
\dd{g_1}{x^c}=g_2+\overline x\dd {g_2}{x^c}.
\end{equation}
\end{proposition}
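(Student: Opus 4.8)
The plan is to deduce the equivalence directly from the characterisation of slice regularity recalled in Section~\ref{sec:pre}: a slice function of class $C^1$ is slice-regular precisely when its slice derivative $\dd{f}{x^c}$ vanishes. First I would record that $f$ qualifies as such a function. Since $g_1,g_2\in\AM(\OO)\subseteq\SF^1(\OO)$ and both the identity and its conjugate are slice functions of class $C^\infty$, writing $g_i=\I(G_i)$ gives $f=\I(G_1-\overline z\,G_2)$, induced by a $C^1$ stem function. Hence it suffices to compute $\dd{f}{x^c}$ and to determine when it is zero.

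The computation rests on two observations. The conjugation $\overline x$ is \emph{slice-preserving}, its inducing stem function being $\overline z=\alpha-\ui\beta$, which has real components; consequently the pointwise product $\overline x g_2$ agrees with the slice product $\overline x\cdot g_2$, and the Leibniz rule for the slice derivative with respect to the slice product becomes available. Next, from $\overline x=\I(\overline z)$ and the definition of the slice derivatives one gets $\dd{\overline x}{x^c}=\I\big(\dd{\overline z}{\overline z}\big)=\I(1)=1$. Combining these with the linearity of $\dd{}{x^c}$ yields
\[
\dd{f}{x^c}=\dd{g_1}{x^c}-\dd{(\overline x\cdot g_2)}{x^c}=\dd{g_1}{x^c}-g_2-\overline x\,\dd{g_2}{x^c}.
\]

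The conclusion is then immediate: $f$ is slice-regular if and only if the right-hand side vanishes, that is, if and only if $\dd{g_1}{x^c}=g_2+\overline x\,\dd{g_2}{x^c}$, which is exactly \eqref{eq:nec}. I do not expect a genuine obstacle here; the only point deserving care is the legitimacy of the Leibniz rule, which hinges precisely on $\overline x$ being slice-preserving so that the slice and pointwise products coincide. The axial monogenicity of $g_1,g_2$ is not used in the computation itself and merely fixes the setting of the statement as a converse to Theorem~\ref{teo:AMDec}; indeed, by Theorem~\ref{thm:harmonicity}(1) one has $\dd{g_i}{x^c}=\sd{g_i}$ on $\OO\setminus\R$, so \eqref{eq:nec} may equivalently be written $\sd{g_1}=g_2+\overline x\,\sd{g_2}$ there.
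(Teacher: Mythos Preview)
Your argument is correct. You use the straightforward characterisation ``$f$ slice-regular $\Leftrightarrow \dd{f}{x^c}=0$'' and the Leibniz rule for the slice product, noting that $\overline x$ is slice-preserving so that the slice and pointwise products of $\overline x$ with $g_2$ coincide; this yields $\dd{f}{x^c}=\dd{g_1}{x^c}-g_2-\overline x\,\dd{g_2}{x^c}$ and the equivalence follows.

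The paper instead invokes the characterisation of slice regularity from Theorem~\ref{thm:harmonicity}(1), namely $\sd f=-\dcf f$. It then uses $\dcf g_1=0$ (axial monogenicity of $g_1$) to reduce $-\dcf f$ to $\dcf(\overline x g_2)$, expands this via Theorem~\ref{thm:harmonicity}(1) and the same Leibniz computation you perform, and finally replaces $\sd{g_1}$ by $\dd{g_1}{x^c}$ (axial monogenicity of $g_1$ again). The core step---the Leibniz expansion of $\dd{(\overline x\cdot g_2)}{x^c}$---is identical in both arguments. Your route is slightly more economical: it appeals only to the definition of slice regularity via $\dd{}{x^c}$ and never actually uses the hypothesis $g_1,g_2\in\AM(\OO)$, so as you remark the computation goes through for any $g_1,g_2\in\SF^1(\OO)$. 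The paper's detour through $\dcf$ and $\sd{}$ reflects the surrounding context (the interplay with monogenicity), but is not logically necessary for this particular proposition.
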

\begin{proof}
Clearly $f$ is a slice function on $\OO$. 
From Theorem \ref{thm:harmonicity}(1), $f$ is slice-regular if and only if $\sd f=-\dcf f$. But $\sd f=\sd{(g_1)}-\sd{(\overline xg_2)}$, while $-\dcf f=\dcf(\overline xg_2)=\dd{(\overline xg_2)}{x^c}-\sd{(\overline x g_2)}=g_2+\overline x\dd{g_2}{x^c}-\sd{(\overline x g_2)}$. Since again from Theorem \ref{thm:harmonicity}(1) it holds $\sd{(g_1)}=\dd{g_1}{x^c}$, $f$ is slice-regular if and only if \eqref{eq:nec} holds.
\end{proof}

\section{A local Cauchy-type integral formula}

Let $\OO=\OO_D$ be an axially symmetric open set. Assume that every connected component of $D$ is simply connected.
Given $f\in\SR(\OO)$,  we know from the proof of Theorem \ref{teo:AMDec} that it is possible to find a slice-regular primitive $g\in\SR(\OO)$. Let $\ZH(\OO)$ denote the right $\hh$-module of  \emph{zonal harmonic functions with pole 1} on $\OO$, i.e., the quaternionic harmonic functions $h$ on $\OO$, such that $h\circ T=h$ for every orthogonal transformation $T$ of $\hh\simeq\rr^4$ that fixes 1. 

Using Theorem \ref{thm:harmonicity}(2a), we can define a linear operator
\[
\mathcal S:\SR(\OO)\to \ZH(\OO)
\]
that maps $f$ to the spherical derivative $\sd g=-\dcf g$ of any slice-regular primitive $g$ of $f$. This map is well-defined since if $\dd g x=\dd {\tilde g}x$, with $g,\tilde g$ slice-regular, then $g-\tilde g$ is locally constant, and then $\sd g-\sd{\tilde g}=\dcf(\tilde g-g)=0$.  It holds $\mathcal S(a)=a$ for any constant $a\in\hh$ and $\mathcal S(x^n)=(n+1)^{-1}\Zt_n$ for every $n\in\nn$. 
If $f$ is slice-preserving, then $\mathcal S f$ is real-valued. Moreover, $\mathcal S$ is injective, since
\[
\ker(\mathcal S)=\{f\in\SR(\OO)\;|\; \textstyle f=\dd g x, g\in\SR(\OO)\cap\AM(\OO)\}=\{0\}.
\]

Let $E(x)=\frac1{2\pi^2}\overline x/|x|^4\in\AM(\hh\setminus\{0\})$ be the Cauchy-Fueter kernel and let $Dy$ be the 3-form with quaternionic coefficients defined as in \cite[(2.28)]{Su}:
\[
Dy=dy_1\wedge dy_2\wedge dy_3-idy_0\wedge dy_2\wedge dy_3+j dy_0\wedge dy_1\wedge dy_3-k dy_0\wedge dy_1\wedge dy_2,
\]
where $y=y_0+iy_1+jy_2+ky_3\in\hh$ with $y_0,y_1,y_2,y_3$ real.
For every $x\in\hh$,  $y\in\hh\setminus\rr$, with $x\ne y$, define the two quaternionic 3-forms 
\begin{align*}
K_1(x,y):&=\big(E(y-x)(Dy)y-\overline x E(y-x)Dy\big)(y-\overline y)^{-1},
\\
K_2(x,y):&=\big(\overline x E(y-x)Dy-E(y-x)(Dy)\overline y\big)(y-\overline y)^{-1}.
\end{align*}

We are now in the position to prove a Cauchy-type integral formula for slice-regular functions on $\OO$ where the integration is performed on the boundary of a not necessarily axially symmetric open subset of $\OO$. As a consequence, we are able to prove a local Cauchy-type integral formula for slice-regular functions.

\begin{theorem}\label{thm:Cauchy}
Let $f$ be slice-regular on an axially symmetric open set $\OO$. Assume that $\OO=\OO_D$, and that every connected component of $D$ is simply connected. Let $U\subset\hh$ be a bounded open set with rectifiable boundary and such that $\overline{U}\subset\OO$. 
Then
\begin{equation}\label{eq:Cauchy}
f(x)=\int_{\partial U}\big(K_1(x,y)f(y)+K_2(x,y)\mathcal S f(y)\big)
\end{equation}
for every $x\in U$.
\end{theorem}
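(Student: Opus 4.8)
The plan is to reduce \eqref{eq:Cauchy} to the classical Cauchy--Fueter formula by means of the axially monogenic decomposition of Theorem \ref{teo:AMDec}. Since $\OO=\OO_D$ has every component of $D$ simply connected, $f$ admits a slice-regular primitive $g\in\SR(\OO)$, so that $\partial g/\partial x=f$ and, by definition of the operator, $\mathcal Sf=\sd{g}$. Theorem \ref{teo:AMDec} then writes
\[
f(x)=g_1(x)-\overline x\,g_2(x),\qquad g_1=-\tfrac14\Delta(xg),\quad g_2=-\tfrac14\Delta g,
\]
with $g_1,g_2\in\AM(\OO)$. As $g_1$ and $g_2$ are monogenic on all of $\OO$ and $\overline U\subset\OO$, the Cauchy--Fueter integral formula (Fueter \cite{Fueter1934}, Sudbery \cite{Su}) applies to each of them on the bounded open set $U$ with rectifiable boundary, giving $g_i(x)=\int_{\partial U}E(y-x)\,Dy\,g_i(y)$ for $x\in U$. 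Here it is essential, and harmless, that $U$ carry no axial symmetry: the Cauchy--Fueter formula requires only monogenicity of the integrand-generating functions on a neighbourhood of $\overline U$. Substituting into the decomposition gives
\[
f(x)=\int_{\partial U}\Big(E(y-x)\,Dy\,g_1(y)-\overline x\,E(y-x)\,Dy\,g_2(y)\Big),\qquad x\in U.
\]

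The core of the argument is to re-express the boundary values $g_1(y),g_2(y)$ through $f(y)$ and $\mathcal Sf(y)$. Applying Lemma \ref{lem:Laplacian} to the slice-regular primitive $g$ and using $\partial g/\partial x=f$, $\sd{g}=\mathcal Sf$, one obtains $\IM(y)\,\Delta g=2(\sd{g}-\partial g/\partial x)=2(\mathcal Sf-f)$; since $\Delta g=-4g_2$ and $y-\overline y=2\IM(y)$, this yields, on $\OO\setminus\R$,
\[
g_2(y)=(y-\overline y)^{-1}\big(f(y)-\mathcal Sf(y)\big),\qquad g_1(y)=f(y)+\overline y\,g_2(y),
\]
the second identity coming directly from $f=g_1-\overline y\,g_2$.

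Inserting these two substitutions into the integrand and collecting the coefficients of $f(y)$ and of $\mathcal Sf(y)$ (which enter as right factors) is then a purely algebraic, noncommutative simplification. The coefficient of $f(y)$ equals $E(y-x)\,Dy\,\big(1+\overline y(y-\overline y)^{-1}\big)-\overline x\,E(y-x)\,Dy\,(y-\overline y)^{-1}$, and the identity $1+\overline y(y-\overline y)^{-1}=\big((y-\overline y)+\overline y\big)(y-\overline y)^{-1}=y(y-\overline y)^{-1}$ turns it into exactly $K_1(x,y)$; the coefficient of $\mathcal Sf(y)$ collapses to $K_2(x,y)$ in the same way, proving \eqref{eq:Cauchy}. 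I expect the only delicate point to be the apparent singularity of $K_1,K_2$ along $\partial U\cap\R$ produced by the factor $(y-\overline y)^{-1}$: there the integrand must be read through its removable extension, since it coincides with the quaternionic $3$-form $E(y-x)\,Dy\,g_1(y)-\overline x\,E(y-x)\,Dy\,g_2(y)$, which is continuous for every $y\neq x$ because $g_1,g_2$ are real-analytic, while $\partial U\cap\R$ is at most one-dimensional and hence negligible for the boundary integral. Keeping the noncommutative products in the correct order throughout is the other bookkeeping point to watch, but the decisive structural step --- manufacturing the two monogenic pieces to which Cauchy--Fueter applies --- is already supplied by Theorem \ref{teo:AMDec}.
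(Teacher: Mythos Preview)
Your proof is correct and follows essentially the same route as the paper: apply Theorem \ref{teo:AMDec} to write $f=g_1-\overline xg_2$, invoke the Cauchy--Fueter formula for each monogenic piece, and then use Lemma \ref{lem:Laplacian} to rewrite $g_1(y),g_2(y)$ in terms of $f(y)$ and $\mathcal Sf(y)=\sd g(y)$. The only cosmetic difference is that the paper computes $g_1$ directly from Lemma \ref{lem:Laplacian} applied to $xg$ (obtaining $g_1=(y-\overline y)^{-1}(yf-\overline y\,\mathcal Sf)$), whereas you recover $g_1$ from the decomposition identity $g_1=f+\overline y\,g_2$; the two expressions are of course equal after the one-line simplification $1+\overline y(y-\overline y)^{-1}=y(y-\overline y)^{-1}$ you carry out. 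Your explicit remark on the removable singularity along $\partial U\cap\R$ is also in line with the paper, which defers that point to the remark following the theorem.
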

\begin{proof}
From Theorem \ref{teo:AMDec} we get the decomposition $f(x)=g_1(x)-\overline x g_2(x)$ with axially monogenic components $g_1= -\tfrac14\Delta(xg)$, $g_2=-\tfrac14\Delta g$, where $g\in\SR(\OO)$ satisfies $\dd g x=f$.  From the Cauchy-Fueter integral formula for monogenic, i.e., Fueter-regular, functions (see \cite{Fueter1934} for the original proof and also \cite{Su}, where the formula was proved in its full generality), we get
\[
f(x)=\int_{\partial U}E(y-x)(Dy) g_1(y)-\overline x \int_{\partial U}E(y-x)(Dy) g_2(y)
\]
for every $x\in U$. Now we transform the two integrals using Lemma \ref{lem:Laplacian}. We obtain
\begin{equation}\label{eq:g1}
g_1=(2\IM(x))^{-1}\left(-\sd{(xg)}+\dd{(xg)}{x}\right)=(2\IM(x))^{-1}\left(-\overline x\sd g+x f\right),
\end{equation}
where we used the equality $\sd{(xg)}=\vs g+x_0\sd g=(\vs g+\IM(x)\sd g)+\overline x\sd g=g+\overline x\sd g$, and
\begin{equation}\label{eq:g2}
g_2=(2\IM(x))^{-1}(-\sd g+f).
\end{equation}
Using \eqref{eq:g1}, \eqref{eq:g2} and $\sd g=\mathcal Sf$, we get 
\begin{align*}
&E(y-x)(Dy) g_1(y)-\overline x E(y-x)(Dy) g_2(y)\\
&=\left(E(y-x)(Dy) (2\IM(y))^{-1}y-\overline x E(y-x)(Dy) (2\IM(y))^{-1}\right)f(y)\\
&\quad-\left(E(y-x)(Dy) (2\IM(y))^{-1}\overline y-\overline x E(y-x)(Dy) (2\IM(y))^{-1}\right)\mathcal S f(y)\\
&=K_1(x,y)f(y)+K_2(x,y)\mathcal S f(y)
\end{align*}
and the integral formula is proved.
\end{proof}

\begin{remark}
The 3-forms $K_1$, $K_2$ are real-analytic for $(x,y)\in\hh\times(\hh\setminus\rr)$ with $x\ne y$. If the closure of $U$ does not intersect the real axis, then the two integrals with kernels $K_1$ and $K_2$ converge also separately and the integral formula \eqref{eq:Cauchy} can be written as a sum of two integrals
\[
f(x)=\int_{\partial U}K_1(x,y)f(y)+\int_{\partial U}K_2(x,y)\mathcal S f(y)\quad\text{for every }x\in U.
\]
The same holds also when the boundary $\partial U$ is sufficiently smooth and the intersection $\partial U\cap\rr\ne\emptyset$ is transversal. 
\end{remark}

\begin{corollary}[Local Cauchy-type integral formula]\label{cor:localCauchy}
Let $f$ be slice-regular on an axially symmetric open set $\OO$. For any point $\tilde x\in\OO$, there exists an axially symmetric open neighbourhood $W\subseteq\OO$ of $\tilde x$ such that for any bounded open set $U$ with rectifiable boundary and $\overline{U}\subset W$, it holds 
\begin{equation}\label{eq:localCauchy}
f(x)=\int_{\partial U}\big(K_1(x,y)f(y)+K_2(x,y)\mathcal S f(y)\big)
\end{equation}
for every $x\in U$.
\end{corollary}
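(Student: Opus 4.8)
The plan is to deduce Corollary~\ref{cor:localCauchy} directly from Theorem~\ref{thm:Cauchy} by a localization argument: the only gap between the two statements is that the corollary drops the hypothesis that the connected components of $D$ be simply connected, and this hypothesis can always be recovered by passing to a sufficiently small axially symmetric neighbourhood $W$ of $\tilde x$, on which the theorem applies verbatim.

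First I would construct $W$. Write $\tilde x=\tilde\alpha+J\tilde\beta$ with $\tilde\beta\ge0$, and let $\tilde z=\tilde\alpha+\ui\tilde\beta\in\cc$ be the corresponding complex point. If $\tilde x\in\rr$ (so $\tilde\beta=0$), I would take $D'$ to be a small open disk centred at $\tilde\alpha$ on the real axis; being such a disk, $D'$ is conjugation-invariant and simply connected, so $W:=\OO_{D'}$ is an axially symmetric slice domain. If $\tilde x\notin\rr$, I would take a small open disk $B$ about $\tilde z$ disjoint from $\rr$ and set $D'=B\cup\overline{B}$, where $\overline{B}=\{\bar w:w\in B\}$ is its reflection across the real axis; then $D'$ is conjugation-invariant with two simply connected components, and $W:=\OO_{D'}$ is an axially symmetric product domain. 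In either case, since $\OO$ is open and axially symmetric and contains the whole $2$-sphere $\s_{\tilde x}$, the radii can be shrunk so that $W\subseteq\OO$, and $W$ is an axially symmetric neighbourhood of $\tilde x$ whose defining set has simply connected components.

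Next I would check that the two occurrences of $\mathcal S f$ are compatible. The operator $\mathcal S$ is intrinsically local: one has $\mathcal S f=\sd g=-\dcf g$ for any local slice-regular primitive $g$ of $f$, and, as observed before the statement of the theorem, this value is independent of the chosen primitive (two primitives differ by a locally constant function, whose spherical derivative vanishes). Hence $\mathcal S f$ is a well-defined function on all of $\OO$ even in the absence of a global primitive, and its restriction to $W$ coincides with $\mathcal S(f|_W)$ computed inside the slice or product domain $W$, where a genuine global slice-regular primitive exists by the construction carried out in the proof of Theorem~\ref{teo:AMDec}.

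Finally I would apply Theorem~\ref{thm:Cauchy} to $f|_W\in\SR(W)$ with $W$ in place of $\OO$: since $W=\OO_{D'}$ has simply connected components, the hypotheses are satisfied, and for any bounded open $U$ with rectifiable boundary and $\overline U\subset W$ the theorem yields \eqref{eq:localCauchy} for every $x\in U$. The point requiring genuine care is not the geometry but this compatibility of $\mathcal S f$: one must ensure that the kernel term involving $\mathcal S f$ in the local formula is literally the same function as the globally defined spherical derivative, which is precisely what the locality of $\mathcal S$ guarantees. The construction of $W$ is routine but must genuinely split into the real and non-real cases, producing a slice domain or a product domain respectively.
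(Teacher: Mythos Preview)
Your proposal is correct and follows essentially the same strategy as the paper: construct an axially symmetric neighbourhood $W=\OO_{D'}$ of $\tilde x$ with $D'$ having simply connected components, then apply Theorem~\ref{thm:Cauchy} to $f|_W$. The paper builds $W$ slightly differently, taking an open ball $B\subseteq\OO$ in $\hh$ centred at $\tilde x$ and letting $W$ be its symmetric completion $\widetilde B=\bigcup_{x\in B}\s_x$; the resulting $E\subset\cc$ is then a single disc, two disjoint conjugate discs, or the union of two \emph{intersecting} conjugate discs (still simply connected), whereas you work directly in $\cc$ and force the non-real case to yield disjoint discs. Both constructions serve equally well, and your explicit discussion of the well-definedness of $\mathcal S f$ on a general $\OO$ matches the remark the paper places immediately after the proof.
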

\begin{proof}
If $B\subseteq\OO$ is an open ball centred in $\tilde x$, we can take $W$ as the symmetric completion $\widetilde B=\cup_{x\in B}\s_x$ of $B$, where $\s_x=\alpha+\s\beta$ for $x=\alpha+J\beta\in B$. Then $W=\OO_E$ with $E\subset\cc$ having simply connected components ($E$ is a complex disc, or a pair of disjoint conjugate discs, or the union of two intersecting conjugate discs). Since $f\in\SR(W)$, the thesis follows from Theorem \ref{thm:Cauchy}.
\end{proof}

The proof of the preceding corollary shows that the operator $\mathcal S$ can be defined on $\SR(\OO)$ for every axially symmetric domain $\OO=\OO_D$, without further assumptions on $D$. For every pair of slice-regular primitives $g\in\SR(W)$, $\tilde g\in\SR(\widetilde W)$ of $f$, it holds $\sd g=\sd{\tilde g}$ on the intersection $W\cap\widetilde W$. This common value defines $\mathcal Sf$.

In the case of quaternionic polynomials, from the equality $\mathcal S(x^n)=(n+1)^{-1}\Zt_n$ we obtain a more explicit form of the integral formula:

\begin{corollary}
Let $f=\sum_{n=0}^dx^na_n\in\hh[x]$ be a polynomial. Then, for any bounded open set $U\subset\hh$ with rectifiable boundary, it holds
\[
f(x)=\sum_{n=0}^d\int_{\partial U}\left(K_1(x,y)y^n+K_2(x,y)\tfrac{\Zt_n(y)}{n+1}\right)a_n
\]
for every $x\in U$.
\qed
\end{corollary}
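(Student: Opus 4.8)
The plan is to recognize this corollary as the specialization of Theorem~\ref{thm:Cauchy} to the case $\OO=\hh$, followed by an explicit evaluation of $\mathcal S$ on the polynomial $f$. First I would note that $f=\sum_{n=0}^d x^n a_n\in\hh[x]$ lies in $\SR(\hh)$, and that $\hh=\OO_\cc$, where the parameter set $\cc$ is connected and simply connected; hence $\OO=\hh$ satisfies the hypotheses of Theorem~\ref{thm:Cauchy}. Since the requirement $\overline U\subset\OO$ is automatic when $\OO=\hh$, the theorem applies to every bounded open set $U$ with rectifiable boundary and gives
\[
f(x)=\int_{\partial U}\big(K_1(x,y)f(y)+K_2(x,y)\mathcal S f(y)\big)\qquad\text{for every }x\in U.
\]

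Next I would compute $\mathcal S f$ explicitly. A slice-regular primitive of $f$ is $g=\sum_{n=0}^d\frac{x^{n+1}}{n+1}a_n$, whose slice derivative is $\sum_{n=0}^d x^n a_n=f$. The spherical derivative commutes with right multiplication by a constant $a\in\hh$, directly from its definition, because $\IM(x)^{-1}\big(h(x)a-h(\overline x)a\big)=\big(\IM(x)^{-1}(h(x)-h(\overline x))\big)a$. Using this together with $\sd{(x^{n+1})}=\Zt_n$, I obtain
\[
\mathcal S f=\sd g=\sum_{n=0}^d\frac{\sd{(x^{n+1})}}{n+1}a_n=\sum_{n=0}^d\frac{\Zt_n}{n+1}a_n,
\]
in agreement with the identity $\mathcal S(x^n)=(n+1)^{-1}\Zt_n$ recalled before the statement.

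Finally I would substitute $f(y)=\sum_{n=0}^d y^n a_n$ and $\mathcal S f(y)=\sum_{n=0}^d\frac{\Zt_n(y)}{n+1}a_n$ into the integral formula. Interchanging the finite sum with the integral and factoring each right-constant $a_n$ out of the integral, which is legitimate because the integration of quaternionic $3$-forms is right $\hh$-linear, yields
\[
f(x)=\sum_{n=0}^d\int_{\partial U}\left(K_1(x,y)y^n+K_2(x,y)\tfrac{\Zt_n(y)}{n+1}\right)a_n\qquad\text{for every }x\in U,
\]
which is the asserted formula.

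I expect no genuine obstacle, as the statement is an immediate consequence of the already-established Theorem~\ref{thm:Cauchy}. The only points demanding care are the noncommutative bookkeeping — keeping every coefficient $a_n$ on the right throughout, and using that both taking a slice primitive and the spherical derivative commute with right multiplication by constants — and the verification that $\hh=\OO_\cc$ meets the simple-connectivity hypothesis, so that Theorem~\ref{thm:Cauchy} imposes no condition on $U$ beyond rectifiability of $\partial U$.
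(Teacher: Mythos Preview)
Your proof is correct and follows exactly the route implicit in the paper: the corollary is stated with a bare \qed, and the sentence preceding it indicates that it arises from Theorem~\ref{thm:Cauchy} by inserting the identity $\mathcal S(x^n)=(n+1)^{-1}\Zt_n$ and using right $\hh$-linearity. Your additional care in verifying that $\hh=\OO_\cc$ meets the simple-connectivity hypothesis and in justifying the right-linearity of the spherical derivative is entirely appropriate but goes beyond what the paper spells out.
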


\begin{example}\label{ex1}
Let $f(x)=x$ and let $B\subseteq\hh$ be an open ball. Then $\mathcal Sf(x)=\frac12\Zt_1(x)=x_0$. For every $x\in B$ it holds
\begin{align*}
&\int_{\partial B}\left(K_1(x,y)y+K_2(x,y)y_0\right)=\int_{\partial B}(K_1(x,y)+K_2(x,y))y_0+\tfrac12\int_{\partial B}K_1(x,y)(y-\overline y)\\
&\quad=\int_{\partial B}E(y-x)(Dy)y_0+\tfrac12\int_{\partial B}\left(E(y-x)(Dy)y-\overline x E(y-x)Dy\right)\\
&\quad=\int_{\partial B}E(y-x)(Dy)(y_0+\tfrac12 y)-\tfrac{\overline x}2\int_{\partial B}E(y-x)Dy=(x_0+\tfrac12 x)-\tfrac{\overline x}2=x,
\end{align*}
since the function $g_1(x)=x_0+\tfrac12 x$ and any constant function are Fueter-regular. Observe that if $x$ is outside the closure of $B$ the integral formula gives a zero value.
\end{example}

\begin{remark}
The integral formula \eqref{eq:Cauchy} has an interpretation which shows its analogy with the classical Cauchy formula in complex analysis. If $F$ is a complex holomorphic function on a neighbourhood $W$ of $\overline D$, and $G$ is a holomorphic primitive of $F$ on $W$, then the Cauchy's integral formula can be written as
\[
F(x)=\int_{\partial D}dG_y(C(x,y)dy)\quad \text{for every $x\in D$},
\]
where $C(x,y)=(2\pi i)^{-1}(y-x)^{-1}$ is the Cauchy kernel and the real differential $dG_y$ of $G$ at $y$ acts on a complex 1-form $a dy$ as multiplication by $G'(y)=F(y)$. 

If $g$ is a slice-regular primitive of $f$ on $\OO$, its real differential at $y\in\OO\cap\cc_I$ is the left $\cc_I$-linear map given by
\[
dg_y=R_{\dd gx(y)}\circ\pi_I+R_{\sd g(y)}\circ\pi_I^\bot,
\]
where $\pi_I:\hh\rightarrow\hh$ denotes the orthogonal projection onto the real vector subspace $\cc_I$, $\pi_I^\bot=\mathit{id}_\HH-\pi_I$ and $R_a$ is the operator of right multiplication by $a\in\hh$ (see \cite[\S3]{GS2020geometric} and \cite[Cor.~3.2]{JGEA2021}). 
Let $\widetilde {dg_y}:\hh^2\to\hh$ be the left $\hh$-linear extension of $dg_y$ defined by
\[
\widetilde {dg_y}(v,w):=R_{\dd gx(y)}(v)+R_{\sd g(y)}(w).
\]
Then it holds $dg_y(v)=\widetilde {dg_y}(\pi_I(v),\pi_I^\bot(v))$ for every $v\in T_y\OO\simeq\hh$. 
Since $\dd gx(y)=f(y)$ and $\sd g(y)=\mathcal Sf(y)$, the integral formula \eqref{eq:Cauchy} of Theorem \ref{thm:Cauchy} can then be written as
\[
f(x)=\int_{\partial U}\widetilde{dg_y}(K_1(x,y),K_2(x,y))\quad \text{for every $x\in U$},
\]
where the action of the operator $\widetilde{dg_y}$ is extended linearly to quaternionic 3-forms by making it act on the coefficients of the forms.
\end{remark}

We now deduce a local Cauchy-type integral formula for the slice derivatives of $f\in\SR(\OO)$. Since 
$\frac{\partial^n f}{\partial x^n}=\frac{\partial^n f}{\partial {x_0}^n}$ for every $n\in\nn$
(see e.g.\ \cite{GeSt2007Adv}), these formulas can be obtained by computing the derivatives of the kernels $K_1$, $K_2$ w.r.t.\ $x_0$. Let $\partial_0$ denote the partial derivative w.r.t.\ $x_0$. Define $K_1^{(n)}(x,y):=\partial_0^nK_1(x,y)$ and $K_2^{(n)}(x,y):=\partial_0^nK_2(x,y)$.

\begin{proposition}
It holds
\begin{equation}\label{eq:dE0}
\partial_0^nE(y-x)=\tfrac{n!}{2\pi^2}\Pn_{-n-3}(y-x),
\end{equation}
and
\begin{align}
K_1^{(n)}(x,y)&=\tfrac{n!}{2\pi^2}\Pn_{-n-3}(y-x)(Dy)y(y-\overline y)^{-1}\label{eq:K1n}
\\
\notag
&-\tfrac{n!}{2\pi^2}\left(\Pn_{-n-2}(y-x)+\overline x \Pn_{-n-3}(y-x)\right)Dy(y-\overline y)^{-1},\\
K_2^{(n)}(x,y)&=\tfrac{n!}{2\pi^2}\left(\Pn_{-n-2}(y-x)+\overline x \Pn_{-n-3}(y-x)\right)Dy(y-\overline y)^{-1}\label{eq:K2n}
\\
\notag
&-\tfrac{n!}{2\pi^2}\Pn_{-n-3}(y-x)(Dy)\overline y(y-\overline y)^{-1}.
\end{align}
for every $n\in\nn$. In particular, $K^{(n)}_1(x,y)+K^{(n)}_2(x,y)=\tfrac{n!}{2\pi^2}\Pn_{-n-3}(y-x)Dy=\partial_0^nE(y-x)Dy$.
\end{proposition}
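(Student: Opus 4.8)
The plan is to reduce the whole statement to a single differentiation rule for the Fueter kernels $\Pn_k$ and then substitute it into the definitions of $K_1$ and $K_2$. The key step is the recursion
\[
\partial_0\Pn_k=(k+2)\,\Pn_{k-1},\qquad k\in\zz,
\]
valid on $\hh\setminus\{0\}$ (and on all of $\hh$ when $k\ge0$). This follows at once from the definition $\Pn_k=-\tfrac14\Delta(x^{k+2})$ in \eqref{eq:defPn}: the operator $\partial_0$ has constant coefficients, hence commutes with $\Delta$, and since $x^{k+2}$ is slice-regular one has $\partial_0(x^{k+2})=(k+2)x^{k+1}$ (using that the slice derivative coincides with $\partial_0$ on slice-regular functions). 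Thus $\partial_0\Pn_k=-\tfrac14\Delta\big((k+2)x^{k+1}\big)=(k+2)\Pn_{k-1}$.

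From here I would derive \eqref{eq:dE0}. Writing $E=\tfrac1{2\pi^2}\Pn_{-3}$ and treating $y$ as fixed, the chain rule gives $\partial_0\big[\Pn_k(y-x)\big]=-(k+2)\,\Pn_{k-1}(y-x)$, the extra minus sign coming from $\partial(y_0-x_0)/\partial x_0=-1$. Hence the $m$-th derivative has the form $c_m\,\Pn_{-m-3}(y-x)$, and passing to the $(m+1)$-st multiplies $c_m$ by the factor $-(k+2)=m+1$ obtained with $k=-m-3$; since $c_0=1$ this gives $c_n=n!$, i.e. $\partial_0^n\big[\Pn_{-3}(y-x)\big]=n!\,\Pn_{-n-3}(y-x)$, which is \eqref{eq:dE0} after multiplying by $\tfrac1{2\pi^2}$. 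I expect this sign bookkeeping to be the only genuinely delicate point, so I would fix conventions by checking the base case $\partial_0E(y-x)=\tfrac1{2\pi^2}\Pn_{-4}(y-x)$ by hand.

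Next I would differentiate the kernels termwise. In $K_1$ and $K_2$ the factors $Dy$, $y$, $\overline y$ and $(y-\overline y)^{-1}$ do not depend on $x$, so the only $x$-dependence sits in $E(y-x)$ and in the left factor $\overline x$. Since $\partial_0\overline x=1$ and $\partial_0^2\overline x=0$, Leibniz' rule applied to $\overline x\,E(y-x)$ retains only two terms, and \eqref{eq:dE0} together with $n\,(n-1)!=n!$ yields
\[
\partial_0^n\big(\overline x\,E(y-x)\big)=\overline x\,\partial_0^nE(y-x)+n\,\partial_0^{n-1}E(y-x)=\tfrac{n!}{2\pi^2}\big(\Pn_{-n-2}(y-x)+\overline x\,\Pn_{-n-3}(y-x)\big).
\]
Substituting this expression and \eqref{eq:dE0} into $\partial_0^nK_1$ and $\partial_0^nK_2$ produces \eqref{eq:K1n} and \eqref{eq:K2n} directly.

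Finally, for the ``in particular'' identity I would add \eqref{eq:K1n} and \eqref{eq:K2n}: the two terms carrying $\big(\Pn_{-n-2}(y-x)+\overline x\,\Pn_{-n-3}(y-x)\big)Dy\,(y-\overline y)^{-1}$ occur with opposite signs and cancel, while the surviving terms combine, using $y(y-\overline y)^{-1}-\overline y(y-\overline y)^{-1}=(y-\overline y)(y-\overline y)^{-1}=1$, into $\tfrac{n!}{2\pi^2}\Pn_{-n-3}(y-x)\,Dy$, which by \eqref{eq:dE0} equals $\partial_0^nE(y-x)\,Dy$. Here one must keep $y$ and $\overline y$ on the same side of $Dy$ to respect noncommutativity, but since they enter with the common right factor $(y-\overline y)^{-1}$ the cancellation is legitimate. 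Once \eqref{eq:dE0} is established, the remaining steps are purely mechanical substitutions.
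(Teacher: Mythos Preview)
Your proof is correct and follows essentially the same route as the paper's: establish the recursion for the $\Pn_k$, iterate it to get \eqref{eq:dE0}, apply Leibniz to $\overline x\,E(y-x)$, and substitute. The only difference is in how the recursion is obtained: the paper writes $\partial_0E=(\cdcf+\dcf)E=\cdcf E$ using monogenicity of $E$ and then quotes $\cdcf\Pn_n=(n+2)\Pn_{n-1}$ from \cite{EigenvaluePbms}, whereas you derive $\partial_0\Pn_k=(k+2)\Pn_{k-1}$ directly by commuting $\partial_0$ with $\Delta$ in the definition $\Pn_k=-\tfrac14\Delta(x^{k+2})$, which is a self-contained and equally valid argument.
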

\begin{proof}
Since $E$ is axially monogenic, it holds $\partial_0E=(\cdcf+\dcf)E=\cdcf E=\tfrac1{2\pi^2}\cdcf\Pn_{-3}$. Since $\cdcf\Pn_n=(n+2)\Pn_{n-1}$ for every $n\in\zz$ (see \cite[Remark~27]{EigenvaluePbms}), we get $\partial_0E=-\tfrac1{2\pi^2}\Pn_{-4}$ and then, inductively, 
\[
\partial_0^nE(x)=(-1)^n\tfrac{n!}{2\pi^2}\Pn_{-n-3}(x)\text{\quad for every $n\in\nn$}.
\]
Formula \eqref{eq:dE0} follows immediately. In order to obtain \eqref{eq:K1n} and \eqref{eq:K2n}, we must compute also $\partial_0^n(\overline xE(y-x))$. From $\partial_0(\overline xE(y-x))=E(y-x)+\overline x\, \partial_0E(y-x)$ we get inductively $\partial_0^n(\overline xE(y-x))=n\partial_0^{n-1}E(y-x)+\overline x\partial_0^{n}E(y-x)$. Using \eqref{eq:dE0} we obtain
\[
\partial_0^n(\overline xE(y-x))=\tfrac{n!}{2\pi^2}\left(\Pn_{-n-2}(y-x)+\overline x\, \Pn_{-n-3}(y-x)\right).
\]
From this equality and \eqref{eq:dE0}, we get \eqref{eq:K1n} and \eqref{eq:K2n}.
\end{proof}

\begin{corollary}[Local Cauchy-type formula for slice derivatives]\label{cor:localCauchyDn}
Let $f\in\SR(\OO)$. For any point $\tilde x\in\OO$, there exists an axially symmetric open neighbourhood $W\subseteq\OO$ of $\tilde x$ such that for any bounded open set $U$ with rectifiable boundary and $\overline{U}\subset W$, it holds 
\begin{equation}\label{eq:localCauchyDerivatives}
\frac{\partial^n f}{\partial x^n}(x)=\int_{\partial U}\big(K_1^{(n)}(x,y)f(y)+K_2^{(n)}(x,y)\mathcal S f(y)\big)
\end{equation}
for every $x\in U$, $n\in\nn$.
\qed
\end{corollary}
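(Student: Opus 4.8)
The plan is to derive \eqref{eq:localCauchyDerivatives} by differentiating the local Cauchy formula \eqref{eq:localCauchy} of Corollary~\ref{cor:localCauchy} exactly $n$ times with respect to the real variable $x_0$, carrying the derivatives under the integral sign onto the kernels. Two facts recalled above do all the bookkeeping: the identity $\frac{\partial^n f}{\partial x^n}=\frac{\partial^n f}{\partial x_0^n}=\partial_0^n f$ turns the differentiated left-hand side into the desired slice derivative, while the preceding Proposition gives $\partial_0^nK_1=K_1^{(n)}$ and $\partial_0^nK_2=K_2^{(n)}$.

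First I would fix $\tilde x\in\OO$ and let $W\subseteq\OO$ be the axially symmetric neighbourhood produced by Corollary~\ref{cor:localCauchy}; for any bounded open $U$ with rectifiable boundary and $\overline{U}\subset W$ we then have $f(x)=\int_{\partial U}\big(K_1(x,y)f(y)+K_2(x,y)\mathcal Sf(y)\big)$ for every $x\in U$. Since $f(y)$ and $\mathcal Sf(y)$ do not depend on $x$, once it is legitimate to differentiate under the integral we obtain $\partial_0^n(K_1f+K_2\mathcal Sf)=(\partial_0^nK_1)f+(\partial_0^nK_2)\mathcal Sf=K_1^{(n)}f+K_2^{(n)}\mathcal Sf$ by the Proposition, and hence \eqref{eq:localCauchyDerivatives}.

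The only real issue, and the step I expect to be delicate, is the justification of differentiation under the integral near the real axis, since $K_1$ and $K_2$ each contain the factor $(y-\overline y)^{-1}$ and are individually singular at $y\in\rr$. I would therefore not estimate $K_1$ and $K_2$ separately but work with their combination in the regular form exhibited in the proof of Theorem~\ref{thm:Cauchy}, namely $K_1(x,y)f(y)+K_2(x,y)\mathcal Sf(y)=E(y-x)(Dy)g_1(y)-\overline x\,E(y-x)(Dy)g_2(y)$, where $g_1,g_2\in\AM(\OO)$ are the axially monogenic components of $f$ from Theorem~\ref{teo:AMDec}. In this expression the sole singularity is at $y=x$, while $g_1$ and $g_2$ are real-analytic, hence bounded on the compact set $\partial U$, and the prefactor $\overline x$ is affine in $x_0$. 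For $x$ ranging over a compact subset $K\subset U$ one has $\mathrm{dist}(K,\partial U)\ge\delta>0$, so $E(y-x)$ and all its $x_0$-derivatives are bounded uniformly for $(x,y)\in K\times\partial U$; consequently every $x_0$-derivative of the combined integrand is continuous and uniformly bounded on $K\times\partial U$. As $\partial U$ is bounded and rectifiable, the standard theorem on differentiation under the integral sign applies and yields $\partial_0^nf(x)=\int_{\partial U}\partial_0^n\big(K_1f+K_2\mathcal Sf\big)$ for every $x\in U$. Combined with the previous paragraph, and reading \eqref{eq:localCauchyDerivatives} --- exactly as \eqref{eq:Cauchy} --- as the integral of the single combined integrand, this proves the formula for every $n\in\nn$.
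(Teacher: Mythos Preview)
Your proposal is correct and follows the same route the paper indicates: the corollary is stated with a bare \qed, the preceding paragraph having already explained that \eqref{eq:localCauchyDerivatives} is obtained from \eqref{eq:localCauchy} by applying $\partial_0^n$ and using $\frac{\partial^n f}{\partial x^n}=\partial_0^n f$ together with the Proposition giving $\partial_0^nK_i=K_i^{(n)}$. Your justification of differentiation under the integral sign via the decomposition $E(y-x)(Dy)g_1(y)-\overline x\,E(y-x)(Dy)g_2(y)$ from the proof of Theorem~\ref{thm:Cauchy} is a sensible addition that the paper leaves implicit.
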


\begin{example}\label{ex2}
Let $f$ and $B\subseteq\hh$ be as in Example \ref{ex1}. For every $x\in B$ it holds
\begin{align*}
&\int_{\partial B}\left(K_1^{(1)}(x,y)y+K_2^{(1)}(x,y)y_0\right)=\int_{\partial B}(K_1^{(1)}(x,y)+K_2^{(1)}(x,y))y_0+\tfrac12\int_{\partial B}K_1^{(1)}(x,y)(y-\overline y)\\
&\quad=\int_{\partial B}\partial_0E(y-x)(Dy)y_0+\tfrac12\int_{\partial B}\left(\partial_0E(y-x)(Dy)y-E(y-x)Dy\right)\\
&\quad-\tfrac12\int_{\partial B}\overline x \partial_0E(y-x)Dy\\
&\quad=\int_{\partial B}\partial_0E(y-x)(Dy)(y_0+\tfrac12 y)-\tfrac12\int_{\partial B}E(y-x)Dy-\tfrac{\overline x}2\int_{\partial B}\partial_0E(y-x)Dy\\
&\quad=\tfrac32-\tfrac12=1=\dd f x,
\end{align*}
since the function $x_0+\tfrac12 x$ is Fueter-regular with derivative $3/2$ w.r.t.\ $x_0$. 
\end{example}

\begin{remark}
One could deduce Cauchy-type estimates for the slice derivatives of a slice-regular function from formula \eqref{eq:localCauchyDerivatives}. However, we observe that they were already obtained more easily by means of the two-dimensional Cauchy's formula (see \cite[Prop. 6.8]{GeStoSt2013})
\end{remark}


\end{document}